\documentclass[a4paper, 10pt, twoside, notitlepage]{amsart}

\usepackage{amsmath,amscd}
\usepackage{amssymb}
\usepackage{amsthm}
\usepackage{comment}
\usepackage{graphicx, xcolor}

\usepackage{mathrsfs}
\usepackage[ocgcolorlinks, linkcolor=blue]{hyperref}


\usepackage{bm}
\usepackage{bbm}
\usepackage{url}

\usepackage[utf8]{inputenc}
\usepackage{mathtools,amssymb}
\usepackage{esint}
\usepackage{tikz}
\usepackage{dsfont}
\usepackage{relsize}
\usepackage{url}
\urlstyle{same}
\usepackage{xcolor}
\usepackage{graphicx}
\usepackage{mathrsfs}
\usepackage[shortlabels]{enumitem}
\usepackage{lineno}
\usepackage{amsmath}
\usepackage{enumitem}
\usepackage{amsthm} 
\usepackage{verbatim}
\usepackage{dsfont}
\numberwithin{equation}{section}

\allowdisplaybreaks

\mathtoolsset{showonlyrefs}

\graphicspath{{images/}}

\newtheorem{theorem}{Theorem}[section]
\newtheorem{lemma}[theorem]{Lemma}

\newtheorem{corollary}[theorem]{Corollary}

\newtheorem{remark}[theorem]{Remark}

\title[Monotonicity in nonlocal elliptic equations]{Monotonicity and local uniqueness for an isotropic nonlocal elliptic equation}

\author[Y.-H.~Lin]{Yi-Hsuan Lin}
\address{Department of Applied Mathematics, National Yang Ming Chiao Tung University, Hsinchu, Taiwan \& Fakult{\"a}t f{\"u}r Mathematik, University of Duisburg-Essen, Essen, Germany}
\email{yihsuanlin3@gmail.com}

\keywords{Nonlocal elliptic operators, monotonicity method, localized potentials, Runge approximation, Caffarelli-Silvestre extension}
\subjclass[2020]{Primary: 35R30, secondary 26A33, 35J70}

\newcommand{\R}{{\mathbb R}}

\newcommand{\N}{{\mathbb N}}

\newcommand {\p} {\partial}

\newcommand{\LC}{\left(}
\newcommand{\RC}{\right)}
\newcommand{\wt}{\widetilde}



\newcommand{\norm}[1]{\lVert #1 \rVert}
\newcommand{\abs}[1]{\left\lvert #1 \right\rvert}
\DeclareMathOperator{\supp}{supp} 


\begin{document}

	\maketitle
	\begin{abstract}
		We extend monotonicity-based inversion methods to an inverse coefficient problem for the isotropic nonlocal elliptic equation
		\[
		(-\nabla \cdot \sigma \nabla)^s u = 0 \quad \text{in } \Omega \subset \mathbb{R}^n,
		\]
		where \(0 < s < 1\), \(n \geq 3\), and \(\Omega\) is a bounded open set. We establish a monotonicity relation between the leading coefficient \(\sigma\) and the (partial) exterior Dirichlet-to-Neumann (DN) map. Our main result shows that a monotonicity ordering of the coefficients implies a corresponding ordering of the DN maps. Furthermore, we construct localized potentials for the nonlocal equation, which yield a local uniqueness result for the fractional inverse problem.

	\end{abstract}

	\tableofcontents
	
	\section{Introduction}\label{sec: introduction}

Inverse problems for nonlocal operators have attracted considerable attention in recent years, with the fractional Laplacian $(-\Delta)^s$ ($0 < s < 1$) being a key example. This operator emerges naturally in models of anomalous stochastic diffusion, characterized by jumps and long-range interactions, as explored in works such as \cite{BV_16, Ros-Oton_16}. In contrast to the classical Laplacian ($s = 1$), which describes standard Brownian motion, the nonlocal nature of the fractional Laplacian introduces significant complexity. Nevertheless, recent progress indicates that inverse problems for nonlocal equations may be more manageable than their local counterparts, thanks to robust properties like unique continuation and Runge approximation.

The Calderón problem for the fractional Schrödinger equation was first investigated in \cite{GSU20}. Central to this work is the Runge approximation property, which allows any $L^2$ function to be approximated by solutions to the fractional Schrödinger equation. This property derives from the unique continuation principle (UCP), which asserts that if $u = (-\Delta)^s u = 0$ in any nonempty open subset, then $u \equiv 0$ across $\mathbb{R}^n$. Subsequent research has extended these findings to variable-coefficient nonlocal elliptic operators \cite{GLX}, addressing a challenge that remains unresolved for their local counterparts.

Building on these foundations, a substantial body of literature has developed. Simultaneous determination results are established in \cite{CLL2017simultaneously, cekic2020calderon, LL2022inverse}, while stability estimates are derived in \cite{RS17, KLW2021calder, ruland2018exponential}. Additionally, monotonicity-based methods for nonlocal inverse problems have been introduced in \cite{HL19_monotone1, HL20_monotone2, lin2020monotonicity}. Further contributions, encompassing both linear and nonlinear settings, are detailed in \cite{LL2020inverse, GRSU20, CMRU20, LZ2023unique} and references therein.

Recent advancements have introduced innovative approaches to inverse problems for nonlocal operators. The recovery of leading coefficients has been achieved through novel nonlocal-to-local reductions utilizing the Caffarelli--Silvestre extension, as demonstrated in \cite{CGRU2023reduction, ruland2023revisiting, LLU2023calder, LZ2024approximation}. Additionally, local-to-nonlocal reduction is characterized in the transversally anisotropic setting in \cite{LNZ_Calderon}. Meanwhile, heat semigroup methods on closed Riemannian manifolds have proven effective for addressing the fractional anisotropic Calderón problem, as explored in \cite{feizmohammadi2021fractional, Fei24_TAMS, FKU2024calder, lin2024fractional, FGKRSU25}. Furthermore, the entanglement principle for nonlocal elliptic operators, investigated in \cite{FKU2024calder, FL24}, shows promise for analyzing systems of nonlocal equations. The Calderón problem for the logarithmic Laplacian—a zero-order nonlocal operator—has also been recently addressed in \cite{HLW_2024_log}. For a comprehensive overview of inverse problems for nonlocal operators, we refer readers to the recent monograph \cite{LL25_IPBook}.\\

	\noindent \textbf{Mathematical formulation.} Let $\Omega\subset \R^n$ be a bounded Lipschitz domain, for $n\geq 3$, and $0<s<1$. Consider the exterior value problem
	\begin{equation}\label{equ: main}
		\begin{cases}
			(-\nabla \cdot \sigma\nabla)^s u =0 &\text{ in }\Omega, \\
			u=f & \text{ in }\Omega_e,
		\end{cases}
	\end{equation}
	where $\sigma=\sigma(x)\in C^2(\R^n)$ satisfying 
	\begin{equation}\label{ellipticity}
		0< \lambda \leq \sigma(x) \leq \lambda^{-1}  \text{ for }x\in \overline{\Omega} \quad \text{and} \quad \sigma=1 \text{ in }\Omega_e,
	\end{equation}
	for some $\lambda\in (0,1)$, and 
	$$
	\Omega_e:=\R^n\setminus \overline{\Omega}
	$$ 
	stands for the exterior domain. Throughout this work, we assume that the condition \eqref{ellipticity} always holds.
	Let $W\subset \Omega_e$ be a bounded open set with $\overline{\Omega}\cap \overline{W}=\emptyset$, and $\Lambda_\sigma$ be the \emph{Dirichlet-to-Neumann} (DN) map of \eqref{equ: main}, which is given by 
	\begin{equation}\label{DN map}
		\Lambda_\sigma : \wt H^s(W) \to H^{-s}(W), \quad f\mapsto \left. (-\nabla \cdot \sigma \nabla )^s u^f \right|_{W},
	\end{equation}
	where $u^f\in H^s(\R^n)$ is the solution to \eqref{equ: main}.
	In this work, we are interested in a monotonicity relation between the DN map and leading coefficients $\sigma$. Throughout this work, let us assume that $\sigma$ is a positive bounded scalar function, with $\sigma|_{\Omega_e}$ being known a priori.
	
	In the works \cite{HL19_monotone1,HL20_monotone2}, the authors demonstrated if-and-only-if monotonicity relations between the DN maps with lower order bounded potentials. To our best knowledge, there is no existing literature to consider such relations between the DN maps with leading coefficients in \eqref{equ: main}.
	In fact, in Section \ref{sec: mono}, we are going to prove that 
	\begin{equation}
		\sigma_1\geq \sigma_2 \text{ in }\Omega \implies \Lambda_{\sigma_1}\geq \Lambda_{\sigma_2}.
	\end{equation}
	Let us make the above sentence meaningful. On the one hand, the relation $\sigma_1\geq \sigma_2$ in $\Omega$ is referred to $\sigma_1(x)\geq \sigma_2(x)$ for almost every (a.e.) $x\in \Omega$. In this work, we have assumed that $\sigma_1,\sigma_2$ satisfy \eqref{ellipticity}, so we justify the half-ordering $\sigma_1\geq \sigma_2$ is referred to $\sigma_1(x)\geq \sigma_2(x)$, for all $x\in \Omega$. On the other hand, we write $\Lambda_{\sigma_1}\geq \Lambda_{\sigma_2}$, if it holds in the quadratic sense that 
	\begin{equation}
		\left\langle \LC \Lambda_{\sigma_1}-\Lambda_{\sigma_2}\RC f , f \right\rangle \geq0,
	\end{equation}
	for any $f\in C^\infty_c(W)$. Here, $\langle \cdot , \cdot \rangle$ denotes the duality pairing in a suitable sense (see Section \ref{sec: prel}).

	The method to study inverse problems using the combination of monotonicity relations with localized potentials was found in \cite{gebauer2008localized}. Due to this remarkable approach and the flexibility of this method, there is some literature in this direction, \cite{arnold2013unique,harrach2009uniqueness,harrach2010exact,harrach2012simultaneous,harrach2013monotonicity,barth2017detecting,harrach2017local,brander2018monotonicity,griesmaier2018monotonicity,harrach2018helmholtz,harrach2018localizing,seo2018learning,harrach2019dimension}. In further, several works build practical reconstruction methods based on monotonicity properties \cite{tamburrino2002new,harrach2015combining,harrach2015resolution,harrach2016enhancing,maffucci2016novel,tamburrino2016monotonicity,garde2017comparison,garde2017convergence,su2017monotonicity,ventre2017design,harrach2018monotonicity,zhou2018monotonicity,garde2019regularized}.

	We revisit the fractional Calder\'on problem for the isotropic nonlocal elliptic equation \eqref{equ: main}. By \cite{CGRU2023reduction}, as $\sigma$ is a scalar function fulfilling \eqref{ellipticity}, the authors proved the global uniqueness for \eqref{equ: main}. In other words, the isotropic scalar function $\sigma$ can be determined uniquely by the DN map $\Lambda_\sigma$. In this work, we prove a local uniqueness result of \eqref{equ: main}:
	
	\begin{theorem}[Local uniqueness]\label{thm: local}
		Let $\Omega \subset \R^n$ be a bounded domain with Lipschitz boundary $\p \Omega$ for $n\geq 3$ and $W\Subset \Omega_e$ be a nonempty open subset. Let $\mathcal{O}\subseteq \overline{\Omega}$ be a connected relatively open subset such that $\mathcal{O}\cap \p \Omega\neq \emptyset$. Let $\sigma_j \in C^2(\R^n)$ satisfy \eqref{ellipticity}, and $\Lambda_{\sigma_j}$ be the DN map of 	\begin{equation}\label{equ: main j=1,2}
			\begin{cases}
				(-\nabla \cdot \sigma_j\nabla)^s u_j=0 &\text{ in }\Omega, \\
				u_j=f & \text{ in }\Omega_e,
			\end{cases}
		\end{equation}
		for $j=1,2$. Suppose
		\begin{equation}\label{pointwise mono assumption}
			\begin{split}
				\text{either}\quad \sigma_1\leq \sigma_2 \text{ in }\mathcal{O} \quad \text{or}\quad \sigma_1 \geq \sigma_2 \text{ in }\mathcal{O},
			\end{split}
		\end{equation}
		then
		\begin{equation}\label{equal DN map}
			\left. \Lambda_{\sigma_1}  f  \right|_W= \left. \Lambda_{\sigma_2} f \right|_W, \quad \text{for any }f\in C^\infty_c(W),
		\end{equation}
		implies $\sigma_1=\sigma_2$ in $\mathcal{O}$.
	\end{theorem}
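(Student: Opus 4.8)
The strategy is the classical monotonicity-plus-localized-potentials argument of Gebauer, adapted to the nonlocal setting. First I would reduce to a single sign, say $\sigma_1 \geq \sigma_2$ in $\mathcal{O}$, the other case being symmetric. Arguing by contradiction, suppose $\sigma_1 \not\equiv \sigma_2$ in $\mathcal{O}$; then by continuity there is a small ball $B \Subset \mathcal{O}$ (which we may take to meet $\partial\Omega$, using that $\mathcal{O}\cap\partial\Omega \neq \emptyset$ and $\mathcal{O}$ is relatively open and connected) and a constant $c>0$ with $\sigma_1 - \sigma_2 \geq c > 0$ on $B\cap\Omega$. The equality \eqref{equal DN map} says $\langle(\Lambda_{\sigma_1}-\Lambda_{\sigma_2})f,f\rangle = 0$ for all $f\in C^\infty_c(W)$.

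Next I would invoke the monotonicity inequality proved in Section~\ref{sec: mono}. The key quantitative version should read: for $f\in C^\infty_c(W)$ with associated solutions $u_j = u_j^f$ of \eqref{equ: main j=1,2},
\begin{equation}
\langle(\Lambda_{\sigma_1}-\Lambda_{\sigma_2})f,f\rangle \;\geq\; c_{n,s}\int_{\R^{n+1}_+} (\sigma_2-\sigma_1)\,|\nabla_x \widetilde{u_2}|^2\, x_{n+1}^{1-2s}\,dx\,dx_{n+1},
\end{equation}
or the analogous purely nonlocal bilinear-form version, where $\widetilde{u_2}$ is the Caffarelli--Silvestre extension of $u_2$ and the integrand involves the coefficient difference tested against the gradient energy of the solution for $\sigma_2$ (one obtains this by the standard "first variation / convexity" computation, writing $\Lambda_{\sigma_1}-\Lambda_{\sigma_2}$ via the difference of energies and discarding a sign-definite remainder). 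Since $\sigma_2 - \sigma_1 \leq -c < 0$ on $B\cap\Omega$ while $\sigma_2-\sigma_1 = 0$ outside $\Omega$ and is bounded on $\Omega\setminus B$, combining with the vanishing of the left-hand side forces, for every admissible $f$,
\begin{equation}
c\int_{B\cap\Omega}|\nabla_x\widetilde{u_2}|^2\,(\cdots)\;\lesssim\;\int_{\Omega\setminus B}|\sigma_2-\sigma_1|\,|\nabla_x\widetilde{u_2}|^2\,(\cdots),
\end{equation}
i.e. the local energy of (the extension of) $u_2$ over $B$ is controlled by its energy over the complement $\Omega\setminus B$.

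The heart of the argument is then the construction of \textbf{localized potentials}: a sequence $f_k\in C^\infty_c(W)$ such that the associated solutions $u_2^{f_k}$ (and their Caffarelli--Silvestre extensions) have energy blowing up on $B$ while staying bounded on $\Omega\setminus\overline{B}$ — contradicting the displayed estimate. This is where the nonlocal structure is exploited: by the Runge approximation property for the variable-coefficient operator $(-\nabla\cdot\sigma_2\nabla)^s$ from data in $W$ (available via the UCP, cf.\ \cite{GLX, GSU20}), the set of solutions restricted to $\Omega$ is dense in the appropriate energy/Sobolev space, and a functional-analytic duality lemma (of the type in \cite{gebauer2008localized}, relating non-existence of a uniform bound to density) produces the desired blow-up on $B$ together with boundedness on $\Omega\setminus\overline B$, provided $B$ touches $\partial\Omega$ so that solutions can be made large near the boundary while controlled in the interior bulk. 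I would isolate this as a separate proposition ("localized potentials for \eqref{equ: main}") and prove it by the standard duality argument: the operator $f\mapsto u^f|_{\Omega\setminus\overline B}$ versus $f\mapsto u^f|_{B}$, and show the former cannot dominate the latter because its adjoint has dense range by Runge approximation.

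The main obstacle I anticipate is making the energy terms rigorous through the Caffarelli--Silvestre extension: one must ensure the bilinear-form monotonicity inequality has the right "local in $x$" structure so that an energy concentrated in $B\cap\Omega$ genuinely appears, and that the Runge approximation statement is in the function space matching that energy (weighted $H^1$ of the extension, or equivalently $\widetilde H^s$). Handling the case where $B$ meets $\partial\Omega$ — so $\sigma_1-\sigma_2$ may degenerate or the trace behaves delicately — and ensuring the localized potentials can be supported in the fixed exterior set $W$ while concentrating energy up to the boundary, is the delicate point; here the hypothesis $\mathcal{O}\cap\partial\Omega\neq\emptyset$ and connectedness of $\mathcal{O}$ are used precisely to chain the blow-up region out to where it can be "seen" by exterior data. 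Once the localized potentials proposition is in hand, the contradiction is immediate and the theorem follows.
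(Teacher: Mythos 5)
Your overall strategy (monotonicity inequality plus localized potentials, argued by contradiction) is the same as the paper's, but as written the proposal has one sign problem and one genuine gap.

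The sign problem: with $\sigma_1\geq\sigma_2$ on $\mathcal{O}$ and $\left\langle(\Lambda_{\sigma_1}-\Lambda_{\sigma_2})f,f\right\rangle=0$, the inequality you display reads $0\geq c_{n,s}\int y^{1-2s}(\sigma_2-\sigma_1)|\nabla_x\widetilde u_2|^2$, i.e. $\int y^{1-2s}(\sigma_1-\sigma_2)|\nabla_x\widetilde u_2|^2\geq 0$, which is perfectly consistent with $\sigma_1\geq\sigma_2$ and does not imply your estimate $c\int_{B}|\nabla_x\widetilde u_2|^2(\cdots)\lesssim\int_{\Omega\setminus B}|\sigma_1-\sigma_2|\,|\nabla_x\widetilde u_2|^2(\cdots)$. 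What is needed is the \emph{lower} bound of \eqref{mono lemma 0}, namely $0=d_s\left\langle(\Lambda_{\sigma_1}-\Lambda_{\sigma_2})f,f\right\rangle\geq\int y^{1-2s}\tfrac{\sigma_2}{\sigma_1}(\sigma_1-\sigma_2)|\nabla\widetilde u_2^f|^2$. This is presumably a slip, but the direction is what drives the whole contradiction.

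The substantive gap is in the localized potentials step. You bound $\int_B$ by the energy over all of $\Omega\setminus B$ and then require solutions whose energy blows up on $B$ while remaining bounded on $\Omega\setminus\overline B$, a set \emph{adjacent} to $B$. The Runge-approximation route you invoke cannot produce these: the approximation targets are functions $v(x)\beta_1(y)$ with $v$ solving $\nabla\cdot(\sigma\nabla v)=0$ in $B\cup D$, $\nabla v|_D\equiv 0$ and $\nabla v|_B\not\equiv 0$ (Theorem \ref{thm: density}); if $\overline B$ and $\overline D$ touch, then $B\cup D$ is connected and unique continuation for the local conductivity equation forces $v$ to be constant, so no admissible target exists and your duality argument collapses. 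The paper sidesteps this by exploiting the sign hypothesis \eqref{pointwise mono assumption} on \emph{all} of $\mathcal{O}$, not only on $B$: the contribution of $(\mathcal{O}\cap\Omega)\setminus B$ to $\int_\Omega y^{1-2s}\tfrac{\sigma_2}{\sigma_1}(\sigma_1-\sigma_2)|\nabla\widetilde u_2^f|^2$ is nonnegative and is simply discarded, leaving only $D:=\Omega\setminus\overline{\mathcal{O}}$ to be controlled. Since then $\overline B\cap\overline D=\emptyset$ and $\overline\Omega\setminus\overline D\supseteq\mathcal{O}$ is connected to $\partial\Omega$ (this is exactly where the hypotheses on $\mathcal{O}$ enter; you do not need $B$ itself to meet $\partial\Omega$), Corollary \ref{Cor: localized potentials} applies and yields $\int_{B\times(0,\infty)}y^{1-2s}|\nabla\widetilde u_2^{f_k}|^2\to\infty$ while $\int_{D\times(0,\infty)}y^{1-2s}|\nabla\widetilde u_2^{f_k}|^2\to 0$, giving the contradiction. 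Your argument should be amended to this decomposition; as stated, the required localized potentials are not available.
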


	\begin{remark}
		There is an alternative way to show that the nonlocal DN maps $\Lambda_\sigma$ of \eqref{equ: main} determine their local DN maps of 
		\begin{equation}
			\begin{cases}
				\nabla \cdot \LC \sigma \nabla v \RC =0  &\text{ in }\Omega, \\
				v=g \in H^{1/2}(\p \Omega) &\text{ on }\p \Omega,
			\end{cases}
		\end{equation} 
		whenever the condition \eqref{ellipticity} holds.
		Therefore, combining $\sigma|_{\p \Omega}=1$, one can use the well-known result from \cite{sylvester1987global} so that the scalar conductivity $\sigma$ can be determined in the entire domain $\Omega$. In other words, a global uniqueness result for \eqref{equ: main} can be derived by using this nonlocal-to-local reduction procedure.
	\end{remark}

	\noindent \textbf{Organization of the article.} The structure of the paper is as follows. In Section~\ref{sec: prel}, we introduce the function spaces, nonlocal operators, and extension problems that will be used throughout the article. Section~\ref{sec: mono} is devoted to proving the monotonicity relation between the DN maps and the leading coefficients. A key step in this analysis is the construction of localized potentials for the extension problem, which is carried out in Section~\ref{sec: localized potential}. Finally, in Section~\ref{sec: thm}, we combine these ingredients to establish our main results.

	\section{Preliminaries}\label{sec: prel}

	\subsection{Function spaces}

	Let us quickly review some function spaces together with the definition of the operator $(-\nabla \cdot \sigma\nabla)^s$, which are introduced in many related articles. Given $0<s<1$, the space $H^s(\R^n)=W^{s,2}(\R^n)$ denotes the usual $L^2$-based fractional Sobolev space with the given norm 
	\begin{equation*}
		\|u\|_{H^{s}(\mathbb{R}^{n})}:=\|u\|_{L^{2}(\R^{n})}+[u]_{H^{s}(\R^{n})}\label{eq:NormHs}
	\end{equation*}
	where $[\cdot]_{H^s(B)}$ 
	\[
	[u]_{H^{s}(B)}:=\LC\int_{B\times B}\frac{\left|u(x)-u(y)\right|^{2}}{|x-y|^{n+2s}}\, dxdy\RC^{1/2},
	\]
	is the seminorm, for any open set $B\subset \R^{n}$.
	
	Motivated by \cite{GSU20}, let $B\subset \R^n$ be a nonempty bounded open set with Lipschitz boundary, $C_{c}^{\infty}(B)$ contains all $C^{\infty}(\mathbb{R}^{n})$-smooth functions supported in $B$. Given $b\in \R$, let us adopt the following notions
	\begin{align*}
		H^{b}(B) & :=\left\{u|_{B}: \, u\in H^{b}(\R^{n})\right\},\\
		\wt H^{b}(B) & :=\text{closure of \ensuremath{C_{c}^{\infty}(B)} in \ensuremath{H^{b}(\R^{n})}},\\
		H_{0}^{b}(B) & :=\text{closure of \ensuremath{C_{c}^{\infty}(B)} in \ensuremath{H^{b}(B)}},
	\end{align*}
	for different fractional Sobolev spaces.
	$H^{b}(B)$ is complete in the sense
	\[
	\|u\|_{H^{b}(B)}:=\inf\left\{ \|w\|_{H^{b}(\mathbb{R}^{n})}: \, w\in H^{b}(\mathbb{R}^{n})\mbox{ and }w|_{B}=u\right\} .
	\]
	As the exponent $b=s\in (0,1)$, $H^{-s}(B)$ stands for the dual space of $\wt H^s(B)$, so that $H^{-s}(B)$ can be characterized by  
	\[
	H^{-s}(B) = \left\{ u|_{B} : u \in H^{-s}(\mathbb{R}^{n}) \right\} \quad \text{with} \quad \inf_{w\in H^{s}(\mathbb{R}^{n}), \ w|_{B}=u} \| w \|_{H^{s}(\mathbb{R}^{n})},
	\]
	In addition, we always denote 
	\begin{equation}
		\big( \wt H^s(B)\big)^\ast = H^{-s}(B)  \quad \text{and}\quad \big( H^s(B) \big)^\ast = \wt H^{-s}(B).
	\end{equation}
	throughout this paper.

	Moreover, we also introduce $L^2$-weighted  Sobolev spaces for the Caffarelli-Silvestre extension problem. Let $A\subseteq \mathbb{R}^{n+1}_+$ be a nonempty set, $y>0$, and consider $L^{2}(A,y^{1-2s})$ as the $L^2$-based weighted Sobolev space give by 
	\begin{equation}
		L^2(A,y^{1-2s}):=\left\{ \wt u=\wt u(x,y): \R^{n+1}_+\to \R: \, 	\|\wt u\|_{L^{2}(D,y^{1-2s})}<\infty\right\},
	\end{equation}
	where
	\[
	\|\wt u\|_{L^{2}(A,y^{1-2s})}:=\bigg(\int_{A}y^{1-2s}\abs{\wt u}^{2}dxdy\bigg)^{1/2}.
	\]
	Define 
	\[
	H^{1}(A,y^{1-2s}):=\left\{\wt u\in L^{2}(A,y^{1-2s}):\,\nabla_{x,y}\wt u\in L^{2}(A,y^{1-2s})\right\},
	\]
	with $\nabla_{x,y}=\LC \nabla_x,\p_y\RC =\LC \nabla ,\p_y \RC$ being the total derivative for $(x,y)\in \mathbb{R}^{n+1}$. It is known that the $H^1(A,y^{1-2s})$ has a natural inner product structure that 
	\begin{equation}
		\LC \wt u , \wt v  \RC_{L^2(A,y^{1-2s})}:=\int_{A} y^{1-2s} \wt u \wt v \, dxdy, 
	\end{equation}
	for any $A\subseteq \R^{n+1}_+$, so that $\norm{\wt u}_{L^2(A,y^{1-2s})}^2 = \LC \wt u , \wt u \RC_{L^2(A,y^{1-2s})}$. In particular, let us denote another weighted Sobolev space by 
	\begin{equation}
		H^1_x(A, y^{1-2s}):= \left\{ \wt u\in L^2(A,y^{1-2s}) : \, \nabla \wt u \in L^2(A,y^{1-2s}) \right\}, 
	\end{equation}
	and clearly, $H^1_x(A,y^{1-2s})\subset H^1(A,y^{1-2s})$.

	\subsection{Nonlocal operators}
	We next review the nonlocal elliptic operator $\mathcal{L}^s$ ($0<s<1$), where  
	$$
	\mathcal{L}:=-\nabla \cdot \LC \sigma \nabla \RC
	$$ 
	is a second-order uniformly elliptic operator of divergence form. Note that when we define the nonlocal elliptic operator $\mathcal{L}^s$, we do not need to assume the condition $\sigma=1$ in $\Omega_e$. It is known that the nonlocal operator $\mathcal{L}^s= (-\nabla \cdot \sigma \nabla)^s$ can be defined by
	\begin{equation}
		\mathcal{L}^{s}:=\frac{1}{\Gamma(-s)}\int_{0}^{\infty}\left(e^{-t\mathcal{L}}-\mathrm{Id}\right)\,\frac{dt}{\tau^{1+s}},\label{eq:1111}
	\end{equation}
	where $e^{-t\mathcal{L}}$ stands for the heat kernel of $\p_t +\mathcal{L}$ in $\R^n\times (0,\infty)$, and $\mathrm{Id}$ denotes the identity map. Moreover, the operator $\mathcal{L}^s$ can be equivalently characterized by the famous Caffarelli-Silvestre \cite{CS_extension} and Stinga-Torrea extension problems \cite{ST10}.

	We can also define the bilinear form of the exterior problem \eqref{equ: main}.  It is known that the exterior problem \eqref{equ: main} is well-posed, and the DN map can be defined by the bilinear form 
	\begin{equation}\label{bilinear}
		\begin{split}
			\left\langle \mathcal{L}^s u, w \right\rangle  = \frac{1}{2}\int_{\R^n\times \R^n} \LC u(x)-u(z)\RC \LC w(x)-w(z) \RC K_s(x,z)\, dxdz,
		\end{split}
	\end{equation}
	where $K_s(x,z)$ is can be derived by the heat kernel 
	\begin{equation}\label{nonlocal kernel}
		K_s(x,z):=\frac{1}{\Gamma(-s)} \int_0^\infty p_t(x,z)\frac{dt}{t^{1+s}}.
	\end{equation}
	As $p_t(x,z)$ stands for the symmetric heat kernel for $\mathcal{L}$ so that 
	\begin{equation}
		\LC e^{-t\mathcal{L}} f\RC (x)=\int_{\R^n} p_t(x,z)f(z)\, dz, \text{ for }x\in \R^n, \ t>0,
	\end{equation}
	and 
	\begin{equation}
		\begin{cases}
			\LC \p _t+\mathcal{L}\RC  \LC e^{-t\mathcal{L}} f \RC   =0 &\text{ for }(x,t)\in \R^{n} \times (0,\infty), \\
			\LC e^{-t\mathcal{L}} f\RC (x,0)=f(x) &\text{ for }x\in \R^n.
		\end{cases}
	\end{equation}
	The heat kernel $p_t(x,z)$ enjoys pointwise estimates (cf. \cite{Davies90})
	\begin{equation}\label{heat kernel estimate}
		c_1 e^{-\alpha_1\frac{|x-z|^2}{t}}t^{-\frac{n}{2}}\leq p_t (x,z)\leq c_2 e^{-\alpha_2\frac{|x-z|^2}{t}}t^{-\frac{n}{2}}, \text{ for }x,z\in \R^n,
	\end{equation}
	for some positive constants $c_1,c_2,\alpha_1$ and $\alpha_2$. Therefore, we can obtain pointwise estimate for $K_s(x,z)$ from the formula \eqref{nonlocal kernel}
	\begin{equation}
		\frac{C_1}{\abs{x-z}^{n+2s}}\leq K_s(x,z)\leq \frac{C_2}{\abs{x-z}^{n+2s}}, \text{ for }x,z\in \R^n,
	\end{equation}
	for some constants $C_1,C_2>0$, which ensures the well-posedness of \eqref{equ: main} (see \cite[Section 3]{GLX} for detailed arguments). 
	This is equivalent to say that given any $f\in H^s(\Omega_e)$, the equation \eqref{equ: main} admits a unique solution $u\in H^s(\R^n)$. This implies the DN map $\Lambda_\sigma$ is well-defined, and there holds 
	\begin{equation}
		\begin{split}
			\left\langle  \Lambda_\sigma f , g \right\rangle_{H^{-s}(W)\times \wt H^s(W)} = \frac{1}{2}\int_{\R^n\times \R^n} \LC u(x)-u(z)\RC \LC w(x)-w(z) \RC K_s(x,z)\, dxdz
		\end{split}
	\end{equation}
	for any $f,g\in \wt H^s(W)$, which justifies \eqref{DN map}.

	\subsection{The extension problem}

	Recalling that the extension problem for the nonlocal operator $\mathcal{L}^s$ with $0<s<1$ is 
	\begin{equation}\label{equ: extension problem}
		\begin{cases}
			\nabla_{x,y}  \cdot (  y^{1-2s}\wt \sigma  \nabla_{x,y} \wt u  )  =0 &\text{ in }\R^{n+1}_+,\\
			\wt u(x,0)=u(x) &\text{ on }\p \R^{n+1}_+ =\R^n,
		\end{cases}
	\end{equation}
	where $\nabla_{x,y}=(\nabla , \p_y)$ with $\nabla =\nabla_x$.
	It is known that there holds the following relation holds  
	\begin{equation}\label{extension Neumann}
		-\lim_{y\to 0}y^{1-2s}\p_y \wt u= d_s ( -\nabla \cdot \sigma\nabla) ^s u\text{ in }\R^n,
	\end{equation}
	where $\R^{n+1}_+:= \left\{(x,y) \in \R^{n+1} : \, x\in   \R^n, \  y>0\right\}$, 
	\begin{equation}\label{d_s constant}
		d_s=\frac{\Gamma(1-s)}{2^{2s-1}\Gamma(s)} >0
	\end{equation}
	is a constant depending only on $s\in (0,1)$, and $\wt \sigma$ is an $(n+1)\times (n+1)$ matrix of the form
	\begin{align}\label{tilde sigma(x)}
		\wt \sigma(x)=\left( \begin{matrix}
			\sigma(x) \mathrm{I}_n& 0\\
			0 & 1 \end{matrix} \right).
	\end{align}
	with the $n\times n$ identity matrix $\mathrm{I}_n$. This type of extension problem was called the Caffarelli-Silvestre or Stinga-Torrea extension problem in the literature.

	\section{Monotonicity relation}\label{sec: mono}
	
	In this section, we want to show that $\sigma_1\geq \sigma_2$ a.e. in $\Omega$ implies that $\Lambda_{\sigma_1}\geq \Lambda_{\sigma_2}$ in the quadratic sense. More specifically, with the discussions from the previous section, we use the notion
	\begin{equation}\label{sense quadratic}
		\Lambda_{\sigma_1} \geq \Lambda_{\sigma_2} \iff 	\left\langle  \Lambda_{\sigma_1} f , f \right\rangle_{H^{-s}(W)\times \wt H^s(W)} \geq 	\left\langle  \Lambda_{\sigma_2} f , f \right\rangle_{H^{-s}(W)\times \wt H^s(W)} 
	\end{equation}
	for any $f\in C^\infty_c(W)$. In what follows, we may use $\langle \cdot, \cdot \rangle \equiv \langle \cdot ,\cdot \rangle_{H^{-s}(W)\times \wt H^s(W)}$ to denote the duality pairing, which simplifies the notations, provided that there are no further confusions.

	\begin{lemma}[Monotonicity relations]\label{Lemma: monotonicity}
		Let $\Omega  \subset \R^n$ and $W\Subset \Omega_e$ be bounded open sets with Lipschitz boundaries, for $n\in \N$. Let $\sigma_j$ be a bounded positive coefficient satisfying \eqref{ellipticity}, and  $\Lambda_{\sigma_j}$ be the DN map of \eqref{equ: main j=1,2}, for $j=1,2$. Then there hold that 
		\begin{equation}\label{mono lemma}
			\begin{split}
				&\quad \, 	\int_{\Omega \times (0,\infty)} y^{1-2s}\LC \sigma_1-\sigma_2\RC \big| \nabla  \wt u_1^f \big|^2 \, dxdy \\
				& \leq d_s \left\langle (\Lambda_{\sigma_1}-\Lambda_{\sigma_2}) f , f \right\rangle \\
				& \leq 	\int_{\Omega \times (0,\infty)} y^{1-2s}\LC \sigma_1-\sigma_2\RC \big| \nabla \wt u_2^f \big|^2 \, dxdy,
			\end{split}
		\end{equation}
		and 
		\begin{equation}\label{mono lemma 0}
			\begin{split}
				&\quad \,	\int_{\Omega \times (0,\infty)} y^{1-2s}\frac{\sigma_2}{\sigma_1}\LC \sigma_1-\sigma_2\RC \big| \nabla \wt u_2^f \big|^2 \, dxdy\\
				& \leq d_s \left\langle (\Lambda_{\sigma_1}-\Lambda_{\sigma_2}) f , f \right\rangle \\
				& \leq 	\int_{\Omega \times (0,\infty)} y^{1-2s}\LC \sigma_1-\sigma_2\RC \big| \nabla \wt u_2^f \big|^2 \, dxdy,
			\end{split}
		\end{equation}
		where $\wt u_j^f$ is the solution to the extension problem \eqref{equ: extension problem} as $\sigma=\sigma_j$ and $\wt u_j^f(x,0)=u_j^f(x)$ in $\R^n$, where $u_j^f \in H^s(\R^n)$ is the solution to \eqref{equ: main j=1,2} as $j=2$, with the constant $d_s>0$ in \eqref{CS extension relation}.			
	\end{lemma}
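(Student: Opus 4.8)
The plan is to transfer the identity to the Caffarelli--Silvestre extension, where the nonlocal DN energy becomes the Dirichlet energy of a degenerate elliptic equation, and then to run the algebraic manipulations underlying the monotonicity method (cf.\ \cite{gebauer2008localized,HL19_monotone1}). For $\sigma$ as in \eqref{ellipticity} write
\[
\mathcal{B}_\sigma(\wt u,\wt v):=\int_{\R^{n+1}_+}y^{1-2s}\LC\sigma\,\nabla\wt u\cdot\nabla\wt v+\p_y\wt u\,\p_y\wt v\RC dxdy,
\]
which coincides with $\int_{\R^{n+1}_+}y^{1-2s}\,\wt\sigma\,\nabla_{x,y}\wt u\cdot\nabla_{x,y}\wt v\,dxdy$ for $\wt\sigma$ as in \eqref{tilde sigma(x)}. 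First I would record two facts about the extensions $\wt u_j^f$. (i) Since $u_j^f$ solves \eqref{equ: main j=1,2}, the function $u_j^f-f$ is an admissible test function supported in $\overline\Omega$ with $\langle\mathcal{L}^s u_j^f,u_j^f-f\rangle=0$; combining this with the trace relation \eqref{extension Neumann} (multiply by $\wt u_j^f$ and integrate by parts over $\R^{n+1}_+$) yields the energy identity $d_s\,\langle\Lambda_{\sigma_j}f,f\rangle=\mathcal{B}_{\sigma_j}(\wt u_j^f,\wt u_j^f)$, with $d_s>0$ as in \eqref{d_s constant}. (ii) The weak form of \eqref{equ: extension problem}, combined with $\mathcal{L}^s u_j^f=0$ in $\Omega$, gives the orthogonality $\mathcal{B}_{\sigma_j}(\wt u_j^f,\wt\varphi)=0$ for every $\wt\varphi\in H^1(\R^{n+1}_+,y^{1-2s})$ whose trace vanishes in $\Omega_e$; in particular $\wt u_1^f-\wt u_2^f$ has trace $f-f=0$ in $\Omega_e$, so it is such a $\wt\varphi$ for both $j=1,2$.

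Next I would polarize $\mathcal{B}_{\sigma_1}$ and apply (ii) with $\wt\varphi=\wt u_1^f-\wt u_2^f$ to obtain
\[
\mathcal{B}_{\sigma_1}(\wt u_1^f,\wt u_1^f)=\mathcal{B}_{\sigma_1}(\wt u_2^f,\wt u_2^f)-\mathcal{B}_{\sigma_1}(\wt u_1^f-\wt u_2^f,\wt u_1^f-\wt u_2^f).
\]
Since the $\p_y$-parts of $\mathcal{B}_{\sigma_1}$ and $\mathcal{B}_{\sigma_2}$ agree and $\sigma_1-\sigma_2$ is supported in $\overline\Omega$ by \eqref{ellipticity}, subtracting $\mathcal{B}_{\sigma_2}(\wt u_2^f,\wt u_2^f)=d_s\langle\Lambda_{\sigma_2}f,f\rangle$ gives the master identity
\[
d_s\,\langle(\Lambda_{\sigma_1}-\Lambda_{\sigma_2})f,f\rangle=\int_{\Omega\times(0,\infty)}y^{1-2s}(\sigma_1-\sigma_2)\,\big|\nabla\wt u_2^f\big|^2\,dxdy-\mathcal{B}_{\sigma_1}(\wt u_1^f-\wt u_2^f,\wt u_1^f-\wt u_2^f).
\]
As $\mathcal{B}_{\sigma_1}(\cdot,\cdot)$ is nonnegative on the diagonal, this already yields the upper bounds in both \eqref{mono lemma} and \eqref{mono lemma 0}; repeating the computation with $\sigma_1$ and $\sigma_2$ interchanged and flipping signs gives the lower bound in \eqref{mono lemma}.

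It remains to prove the refined lower bound in \eqref{mono lemma 0}, i.e.\ to bound the error term in the master identity by $\wt u_2^f$ alone. Setting $\wt w:=\wt u_1^f-\wt u_2^f$ and using (ii) once for $\sigma_1$ (to discard the $\wt u_1^f$ contribution) and once for $\sigma_2$, I rewrite
\[
\mathcal{B}_{\sigma_1}(\wt w,\wt w)=-\int_{\R^{n+1}_+}y^{1-2s}(\sigma_1-\sigma_2)\,\nabla\wt u_2^f\cdot\nabla\wt w\,dxdy .
\]
Applying the Cauchy--Schwarz inequality in $L^2(y^{1-2s}\,dxdy)$ to the factors $\sigma_1^{-1/2}|\sigma_1-\sigma_2|\,|\nabla\wt u_2^f|$ and $\sigma_1^{1/2}|\nabla\wt w|$ (legitimate since $\sigma_1\ge\lambda>0$), together with $\int y^{1-2s}\sigma_1|\nabla\wt w|^2\le\mathcal{B}_{\sigma_1}(\wt w,\wt w)$, and dividing once through, I obtain $\mathcal{B}_{\sigma_1}(\wt w,\wt w)\le\int_{\Omega\times(0,\infty)}y^{1-2s}\,\sigma_1^{-1}(\sigma_1-\sigma_2)^2\,|\nabla\wt u_2^f|^2\,dxdy$. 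Inserting this into the master identity and using the pointwise identity $(\sigma_1-\sigma_2)-\sigma_1^{-1}(\sigma_1-\sigma_2)^2=\sigma_1^{-1}\sigma_2(\sigma_1-\sigma_2)$ produces exactly the lower bound in \eqref{mono lemma 0}.

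I do not expect a genuine obstruction here: once the nonlocal DN form is identified with the degenerate elliptic energy $\mathcal{B}_\sigma$, the rest is polarization and a single weighted Cauchy--Schwarz, exactly as in the classical conductivity case. The points that will require care lie in facts (i) and (ii): setting up the weak formulation of the extension problem in the generality needed (so that the orthogonality holds for \emph{arbitrary} trace-vanishing $\wt\varphi$, not merely energy-minimizers), and checking that the weighted integrand $y^{1-2s}\sigma_1^{-1}(\sigma_1-\sigma_2)^2|\nabla\wt u_2^f|^2$ is integrable — both of which follow from \eqref{ellipticity}, $\wt u_2^f\in H^1(\R^{n+1}_+,y^{1-2s})$, and the well-posedness recalled in Section~\ref{sec: prel}.
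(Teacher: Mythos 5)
Your proposal is correct and follows essentially the same route as the paper: pass to the Caffarelli--Silvestre extension, establish the energy identity $d_s\langle\Lambda_{\sigma_j}f,f\rangle=B_{\wt\sigma_j}(\wt u_j^f,\wt u_j^f)$ together with the orthogonality against trace-vanishing test functions, and expand the quadratic form of the difference $\wt u_1^f-\wt u_2^f$ to get \eqref{mono lemma}. The only (inessential) deviation is in the refined lower bound of \eqref{mono lemma 0}, where you control the error term $B_{\wt\sigma_1}(\wt u_1^f-\wt u_2^f,\wt u_1^f-\wt u_2^f)$ by a weighted Cauchy--Schwarz inequality and the pointwise identity $(\sigma_1-\sigma_2)-\sigma_1^{-1}(\sigma_1-\sigma_2)^2=\sigma_1^{-1}\sigma_2(\sigma_1-\sigma_2)$, whereas the paper completes the square via $\sigma_1\left|\nabla\wt u_1^f-\frac{\sigma_2}{\sigma_1}\nabla\wt u_2^f\right|^2$; the two computations are algebraically equivalent, and your version has the small advantage of cleanly keeping track of the nonnegative $\partial_y$-contributions that the paper's intermediate identity silently discards.
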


	\begin{remark}
		Let us point out that 
		\begin{enumerate}[(i)]
			\item  As $s=1$, the monotonicity implication from the conductivity to the DN map is easy to derive using the bilinear form and integration by parts. However, as shown in Section \ref{sec: prel}, it is known that the bilinear form \eqref{bilinear} may not be useful to derive this monotonicity relation. Instead, we will use the Caffarelli-Silvestre type extension problem \eqref{equ: extension problem} to prove the above lemma.
			
			\item Note that the both sides of \eqref{mono lemma} contain solutions $\wt u_1^f$ and $\wt u_2^f$, but in the both sides of \eqref{mono lemma 0} only consist the solution $\wt u_2^f$. The relation \eqref{mono lemma} is already useful in the determination of the leading coefficients. For further applications in inverse problems (such as inverse obstacle problems), the monotonicity formula \eqref{mono lemma 0} would be needed. However, we do not pursue this problem in this article.
		\end{enumerate}
		
	\end{remark}
	
	\begin{proof}[Proof of Lemma \ref{Lemma: monotonicity}]
		Let $\wt \sigma_j$ be of the form \eqref{tilde sigma(x)} as $\sigma=\sigma_j$, and $\wt u_j^f$ denote the solution to \eqref{equ: extension problem} as $\wt u =\wt u^f _j$ with $\wt u_j^f(x,0)=u_j^f(x)$ in $\R^n$, for $j=1,2$. Here $u_j^f\in H^s(\R^n)$ is the solution to \eqref{equ: main j=1,2} for $j=1,2$. Then there holds that 
		\begin{equation}\label{CS extension relation}
			-\lim_{y\to 0}y^{1-2s}\p_y \wt u_j^f= d_s (-\nabla \cdot \sigma_j \nabla )^s u_j^f \text{ in }\R^n,
		\end{equation}
		for $j=1,2$, where $d_s$ is a positive constant given by \eqref{d_s constant}. On the one hand, there holds 
		\begin{equation}\label{mono lemma 1}
			\begin{split}
				\left\langle \Lambda_{\sigma_j} f, f \right\rangle &= \int_{W}(-\nabla \cdot \sigma_j\nabla )^s u_j^f \cdot f \, dx ,
			\end{split}
		\end{equation}
		for $j=1,2$. On the other hand, we have 
		\begin{equation}
			\begin{split}
				0&=\int_{\R^{n+1}_+} \nabla_{x,y}\cdot \big( y^{1-2s}\wt \sigma \nabla_{x,y} \wt u^f_1 \big) \wt u_1^f\, dxdy \\
				&= -\int_{\R^n}\big( \lim_{y\to 0}y^{1-2s} \p_y \wt u^f_1 \big) \wt u_1^f (x,0)\, dx - \int_{\R^{n+1}_+}y^{1-2s}\wt \sigma_j \nabla_{x,y}\wt u^f_1\cdot \nabla_{x,y}\wt u^f_1 \, dxdy \\
				&= \underbrace{d_s \int_{\R^n} (-\nabla \cdot \sigma_j\nabla)^s u_1^f \cdot u_1^f \, dx}_{\text{By \eqref{extension Neumann}}} - B_{\wt \sigma_1}(\wt u_1^f, \wt u_1^f) \\
				&=\underbrace{d_s\int_{W}(-\nabla \cdot \sigma_j\nabla)^s u_1^f \cdot f \, dx}_{\text{Since }u^f \text{ solves \eqref{equ: main j=1,2} and }\left. u_1^f\right|_{\Omega_e}=f} - B_{\wt \sigma_1}(\wt u_1^f, \wt u_1^f),
			\end{split}
		\end{equation}
		which implies that 
		\begin{equation}\label{mono lemma 2}
			\begin{split}
				B_{\wt \sigma_1}(\wt u_1^f, \wt u_1^f) =d_s \left\langle \Lambda_{\sigma_1}f, f \right\rangle,
			\end{split}
		\end{equation}
		where 
		\begin{equation}
			\begin{split}
				B_{\wt \sigma_j}(\wt u, \wt w)&:=   \int_{\R^{n+1}_+} y^{1-2s}\wt \sigma_j \nabla_{x,y}\wt u \cdot \nabla_{x,y}\wt w \, dxdy\\
				&\ = \int_{\R^{n+1}_+}y^{1-2s}\sigma_j \nabla \wt u\cdot \nabla \wt w \, dxdy + \int_{\R^{n+1}_+}y^{1-2s}\p_y \wt u \p_y \wt w \, dxdy
			\end{split}
		\end{equation}
		is a symmetric bilinear form, for $j=1,2$ and any functions $\wt u$ and $\wt w$. 
		
		Similarly, we also have 
		\begin{equation}\label{mono lemma 3}
			B_{\wt \sigma_1}(\wt u_1^f, \wt u_2^f) =d_s \left\langle \Lambda_{\sigma_1}f, f \right\rangle,
		\end{equation}
		
		Combining \eqref{mono lemma 1}, \eqref{mono lemma 2} and \eqref{mono lemma 3}, we have 
		\begin{equation}\label{mono lemma 4}
			\begin{split}
				d_s\left\langle \Lambda_{\sigma_1}f, f \right\rangle&=B_{\wt \sigma_1}(\wt u_1^f, \wt u_1^f)  = 	B_{\wt \sigma_1}(\wt u_1^f, \wt u_2^f) , \\
				d_s\left\langle \Lambda_{\sigma_2}f, f \right\rangle&=B_{\wt \sigma_2}(\wt u_2^f, \wt u_2^f) .
			\end{split}
		\end{equation}
		Direct computations yield that 
		\begin{equation}\label{mono lemma 5}
			\begin{split}
				0& \leq B_{\wt \sigma_1} (\wt u_1^f -\wt u_2^f, \wt u_1^f -\wt u_2^f )
				\\
				&= B_{\wt \sigma_1}(\wt u_1^f , \wt u_1^f ) -2 B_{\wt \sigma_1} (\wt u_1^f ,\wt u_2^f ) + B_{\wt \sigma_1} (\wt u_2^f ,\wt u_2^f )\\
				&=	-d_s\left\langle \Lambda_{\sigma_1}f, f \right\rangle + d_s\left\langle \Lambda_{\sigma_2}f, f \right\rangle +B_{\wt \sigma_1}(\wt u_2^f ,\wt u_2 ^f )-B_{\wt \sigma_2}(\wt u_2^f, \wt u_2^f),
			\end{split} 
		\end{equation}
		and this implies that 
		\begin{equation}\label{mono lemma 6}
			\begin{split}
				&\quad \, d_s \LC \left\langle \Lambda_{\sigma_1}f, f \right\rangle - \left\langle \Lambda_{\sigma_2}f, f \right\rangle \RC \\
				&\leq  \bigg( \int_{\R^{n+1}_+}y^{1-2s}\sigma_1 \big| \nabla \wt u_2^f \big|^2 \, dxdy +  \int_{\R^{n+1}_+}y^{1-2s}|\p_y \wt u_2^f |^2 \, dxdy \bigg) \\ 
				&\quad \, - \bigg( \int_{\R^{n+1}_+}y^{1-2s}\sigma_2 \big| \nabla \wt u_2^f \big|^2 \, dxdy +  \int_{\R^{n+1}_+}y^{1-2s}|\p_y \wt u_2^f |^2 \, dxdy \bigg) \\
				&\leq \int_{\R^{n+1}_+}y^{1-2s} \LC \sigma_1 -\sigma_2 \RC \big|\nabla \wt u_2^f \big|^2 \, dxdy\\
				&= \int_{\Omega \times (0,\infty)}y^{1-2s} \LC \sigma_1 -\sigma_2 \RC \big|\nabla \wt u_2^f \big|^2 \, dxdy,
			\end{split}
		\end{equation}
		where we used $\sigma_1=\sigma_2$ in $\Omega_e$. This proves the right-hand side ordering in \eqref{mono lemma}.
		Interchanging the indices $j=1,2$ in \eqref{mono lemma 6}, we can obtain the left-hand side in \eqref{mono lemma}. 
		
		Finally, for the left-hand side in \eqref{mono lemma 0}, let us interchange $\sigma_1$ and $\sigma_2$ in \eqref{mono lemma 5} and \eqref{mono lemma 6}, then we have 
		\begin{equation}
			\begin{split}
				&\quad \, d_s \left\langle (\Lambda_{\sigma_1}-\Lambda_{\sigma_2}) f, f\right\rangle  \\
				&= \int_{\Omega\times (0,\infty)} y^{1-2s} \LC\sigma_1- \sigma_2\RC \big| \nabla \wt u_1^f|^2 \, dxdy \\
				&\quad \, +\int_{\Omega\times (0,\infty)} y^{1-2s} \sigma_2 \big| \nabla \big( \wt u_2^f-\wt u_1^f \big)\big|^2 \, dxdy \\
				&= \int_{\Omega\times (0,\infty)} y^{1-2s} \big( \sigma_1 \big| \nabla \wt u_1^f\big|^2 + \sigma_2 \big| \nabla \wt u_2^f \big|^2 -2\sigma_2 \nabla \wt u_1^f \cdot \nabla \wt u_2^f  \big)\, dxdy \\
				&=\underbrace{\int_{\Omega \times (0,\infty)} y^{1-2s}\sigma_1 \left| \nabla \wt u_1^f -\frac{\sigma_2}{\sigma_1}\nabla \wt u_2^f \right|^2 \, dxdy}_{\text{nonnegative}} \\
				&\quad \, + \int_{\Omega\times (0,\infty)} y^{1-2s} \LC \sigma_2 -\frac{\sigma^2}{\sigma_1}\RC \big| \nabla \wt u^f_2 \big|^2 \, dxdy \\
				&\geq \int_{\Omega\times (0,\infty)} y^{1-2s} \frac{\sigma_2}{\sigma_1}\LC \sigma_1 -\sigma_2\RC \big| \nabla \wt u^f_2 \big|^2 \, dxdy,
			\end{split}
		\end{equation}
		which proves \eqref{mono lemma 0}.
	\end{proof}

	\begin{corollary}
		Adopting all assumptions in Lemma \ref{Lemma: monotonicity}, let $\sigma_1,\sigma_2$ be two positive bounded coefficients, then 
		\[
		\sigma_1 \geq \sigma_2 \text{ implies }\Lambda_{\sigma_1}\geq \Lambda_{\sigma_2}.
		\] 
	\end{corollary}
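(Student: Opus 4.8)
The plan is to read off the claim directly from the left-hand inequality in \eqref{mono lemma} of Lemma \ref{Lemma: monotonicity}, so essentially no new work is needed beyond unwinding definitions. Fix an arbitrary $f\in C^\infty_c(W)$. Lemma \ref{Lemma: monotonicity} gives
\[
\int_{\Omega \times (0,\infty)} y^{1-2s}\LC \sigma_1-\sigma_2\RC \big| \nabla  \wt u_1^f \big|^2 \, dxdy \;\leq\; d_s \left\langle (\Lambda_{\sigma_1}-\Lambda_{\sigma_2}) f , f \right\rangle,
\]
where $\wt u_1^f$ is the Caffarelli--Silvestre extension attached to $\sigma_1$ and the exterior datum $f$.

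Next I would observe that the integrand on the left is pointwise nonnegative. Indeed, the weight $y^{1-2s}$ is strictly positive on $\{y>0\}$, the factor $\big|\nabla \wt u_1^f\big|^2$ is nonnegative, and the hypothesis $\sigma_1 \geq \sigma_2$ in $\Omega$ (interpreted, as in \eqref{ellipticity} and the discussion following it, as $\sigma_1(x)\geq \sigma_2(x)$ for all $x\in\Omega$) makes $\sigma_1-\sigma_2\geq 0$ on $\Omega$; recall also that $\sigma_1=\sigma_2=1$ in $\Omega_e$, which is why the integral may be taken over $\Omega\times(0,\infty)$ in the first place. Hence the left-hand side is $\geq 0$, and therefore $d_s \left\langle (\Lambda_{\sigma_1}-\Lambda_{\sigma_2}) f , f \right\rangle \geq 0$. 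Since $d_s>0$ by \eqref{d_s constant}, we may divide to conclude $\left\langle (\Lambda_{\sigma_1}-\Lambda_{\sigma_2}) f , f \right\rangle \geq 0$.

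As $f\in C^\infty_c(W)$ was arbitrary, this is exactly the statement $\Lambda_{\sigma_1}\geq \Lambda_{\sigma_2}$ in the quadratic sense \eqref{sense quadratic}, which completes the argument. There is no real obstacle at this stage: all the substance — replacing the nonlocal bilinear form \eqref{bilinear}, which is not well suited to detecting the leading coefficient, by the degenerate-elliptic extension bilinear form $B_{\wt\sigma_j}$ and exploiting its symmetry and the energy inequality \eqref{mono lemma 5} — has already been carried out in the proof of Lemma \ref{Lemma: monotonicity}. The only points one should be slightly careful about are that the extension solutions $\wt u_j^f$ lie in the appropriate weighted space $H^1(\mathbb{R}^{n+1}_+,y^{1-2s})$ so the integrals are finite, and that the identification \eqref{CS extension relation} between the weighted Neumann trace and $d_s(-\nabla\cdot\sigma_j\nabla)^s u_j^f$ holds; both are in place under \eqref{ellipticity}.
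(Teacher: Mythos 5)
Your argument is correct and is exactly what the paper intends: the paper's proof consists of the single line that the claim follows from either \eqref{mono lemma} or \eqref{mono lemma 0}, and you have simply unwound the left-hand inequality of \eqref{mono lemma} together with the positivity of $y^{1-2s}$, $|\nabla\wt u_1^f|^2$, $\sigma_1-\sigma_2$, and $d_s$. No issues.
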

	
	\begin{proof}
		The proof can be seen using either \eqref{mono lemma} or \eqref{mono lemma 0}. 
	\end{proof}
	
	\begin{remark}
		From the above discussions, one can see the monotonicity relations depend only on the gradient of certain solutions with respect to the transversal direction (i.e., $x$-variable), but not $y\in (0,\infty)$.
	\end{remark}

	\section{Localized potentials and Runge approximation}\label{sec: localized potential}

	We first rewrite the extension problem \eqref{equ: extension problem}. 
	If $u\in H^s(\R^n)$ is a solution to \eqref{equ: main} into
	\begin{equation}\label{equ: extension problem 2}
		\begin{cases}
			\nabla_{x,y}  \cdot (  y^{1-2s}\wt \sigma  \nabla_{x,y} \wt u  )  =0 &\text{ in }\R^{n+1}_+,\\
			\displaystyle\lim_{y\to 0}y^{1-2s}\p_y \wt u =0 &\text{ on }\Omega \times \{0\}, \\
			\wt u(x,0)=f(x) &\text{ on }\Omega_e\times \{0\}.
		\end{cases}
	\end{equation}
	Thanks to the monotonicity formulas given in the previous section, both sides in \eqref{mono lemma} and \eqref{mono lemma 0} depend only on the gradient along the $x$-direction but are independent of $y>0$. Thus, we can construct localized potentials for the extension problem \eqref{equ: extension problem 2} by showing the Runge approximation with respect to the $x$-variable. More specifically, we have the next result.

	\begin{theorem}[Runge approximation]\label{thm: density}
		Let $\Omega\subset \R^n$ be a bounded Lipschitz domain for $n\geq 3$, and $B, D\subseteq \overline{\Omega}$ be measurable sets, $B\setminus \overline{D}$ possess positive measure, and $\overline{\Omega} \setminus \overline{D}$ is connected to $\p \Omega$. Then there exist  functions $v =v(x)\in H^1(B\cup D)$, and $0\not\equiv\beta_1=\beta_1(y)\in C^\infty_c((0,\infty))$, such that the function $v(x)\beta_1(y)$ can be approximated in the $H^1_x(B\cup D,y^{1-2s})$-norm by solutions $\wt u$ of \eqref{equ: extension problem 2}, and satisfies 
		\begin{equation}\label{property of function v}
			\nabla v| _{D}\equiv 0  \quad \text{and} \quad \nabla v|_{B}\not\equiv 0.
		\end{equation}
	\end{theorem}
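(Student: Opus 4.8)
The plan is to deduce the theorem from a Runge-type density statement for the extension problem \eqref{equ: extension problem 2} and to prove the latter by a Hahn--Banach duality argument combined with unique continuation, following the localized-potentials philosophy of \cite{gebauer2008localized} but for the degenerate elliptic operator and with the approximation measured only in the $x$-gradient. First I would fix the target. Using that $B\setminus\overline D$ has positive measure and that $\overline{\Omega}\setminus\overline D$ is connected to $\p\Omega$, choose a connected relatively open set $G\subseteq\overline{\Omega}\setminus\overline D$ with $G\cap\p\Omega\neq\emptyset$ and $|G\cap B|>0$, and pick $v\in C^\infty_c(\R^n)$ with $\spt v\subseteq G\cup\Omega_e$ (so that $\nabla v$ vanishes near $\overline D$, in particular $\nabla v|_D\equiv0$) and $\nabla v\not\equiv0$ on $G\cap B$, which forces $\nabla v|_B\not\equiv0$; this gives \eqref{property of function v}. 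Choose any $0\not\equiv\beta_1\in C^\infty_c((0,\infty))$: the point of taking $\beta_1$ compactly supported in the open half-line is that $\beta_1(0)=0$, so $v(x)\beta_1(y)$ has vanishing trace on $\R^n\times\{0\}$ and, since $n\geq3$, $v\beta_1\in H^1_x((B\cup D)\times(0,\infty),y^{1-2s})$. It then remains to show that $v\beta_1$ lies in the $H^1_x((B\cup D)\times(0,\infty),y^{1-2s})$-closure $\overline{\mathcal S}$ of the set $\mathcal S$ of restrictions of solutions $\wt u^f$ of \eqref{equ: extension problem 2} (it suffices to take $f\in C^\infty_c(W)$ for a fixed $W\Subset\Omega_e$).

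By Hahn--Banach it is enough to show that every bounded linear functional $L$ on $\mathcal X:=H^1_x((B\cup D)\times(0,\infty),y^{1-2s})$ that vanishes on $\mathcal S$ also vanishes at $v\beta_1$. Writing $L(w)=\int_{(B\cup D)\times(0,\infty)}y^{1-2s}\LC Fw+\vec G\cdot\nabla_x w\RC\,dxdy$ for suitable weighted $L^2$ fields $F,\vec G$ (extended by zero to $\R^{n+1}_+$), I would introduce the adjoint solution $\wt\varphi$, namely the energy solution of
\[
\nabla_{x,y}\cdot\LC y^{1-2s}\wt\sigma\,\nabla_{x,y}\wt\varphi\RC=\nabla_x\cdot\LC y^{1-2s}\vec G\RC-y^{1-2s}F\ \text{ in }\R^{n+1}_+,\qquad \lim_{y\to0}y^{1-2s}\p_y\wt\varphi=0\ \text{ on }\Omega\times\{0\},\qquad \wt\varphi=0\ \text{ on }\Omega_e\times\{0\},
\]
which is well posed in the appropriate homogeneous weighted space (here $n\geq3$ enters). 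Testing the weak formulations of $\wt u^f$ and of $\wt\varphi$ against each other, and against a fixed extension of $f$ supported near $\spt f$ and away from $\overline\Omega$, one obtains, after using $\beta_1(0)=0$, the two identities $L(\wt u^f)=-d_s\langle\p_\nu\wt\varphi,\,f\rangle$ and $L(v\beta_1)=B_{\wt\sigma}(\wt\varphi,v\beta_1)$, where $\p_\nu\wt\varphi:=-\lim_{y\to0}y^{1-2s}\p_y\wt\varphi$ is the conormal trace of $\wt\varphi$ on the exterior region and $B_{\wt\sigma}(\cdot,\cdot)$ is the weighted bilinear form from Section~\ref{sec: mono}.

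Now the hypothesis $L|_{\mathcal S}=0$ forces $\p_\nu\wt\varphi=0$ on $W$; together with the Dirichlet condition $\wt\varphi=0$ on $\Omega_e\times\{0\}$, the trace $\varphi:=\wt\varphi(\cdot,0)$ has vanishing Cauchy data on $W\times\{0\}$. Invoking the unique continuation property for $\mathcal L^s$ from \cite{GLX}, and the unique continuation property for the degenerate elliptic extension operator across the flat boundary — a Carleman-type argument that remains valid under the $C^2$ hypothesis \eqref{ellipticity} — together with the connectedness of $\overline{\Omega}\setminus\overline D$ to $\p\Omega$, one propagates the vanishing of $\wt\varphi$ from $\Omega_e\times(0,\infty)$ through the source-free part of $\LC\overline{\Omega}\setminus\overline D\RC\times(0,\infty)$. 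By the choice of $G$ this region contains a neighbourhood of $\spt\nabla\LC v\beta_1\RC$, so $\nabla_{x,y}\wt\varphi\equiv0$ there and hence $L(v\beta_1)=B_{\wt\sigma}(\wt\varphi,v\beta_1)=0$.

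Since $L$ was an arbitrary functional annihilating $\mathcal S$, Hahn--Banach gives $v\beta_1\in\overline{\mathcal S}$, i.e.\ $v(x)\beta_1(y)$ is approximated in the $H^1_x((B\cup D)\times(0,\infty),y^{1-2s})$-norm by solutions of \eqref{equ: extension problem 2}; together with \eqref{property of function v} this proves the theorem. The step I expect to be the main obstacle is the unique continuation: one must set up the adjoint extension problem in the right homogeneous weighted framework and justify the associated conormal trace, and then propagate the vanishing of $\wt\varphi$ from the exterior through $\LC\overline{\Omega}\setminus\overline D\RC\times(0,\infty)$ down to $\spt\nabla v$ without crossing the support of the source — it is precisely here that the connectedness hypothesis is used, and the degeneracy of the weight $y^{1-2s}$ together with the merely $C^2$ regularity of $\sigma$ makes this the delicate point. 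An equivalent route is to phrase everything through Gebauer's abstract localized-potentials lemma, reformulating the claim as the impossibility of a uniform bound of $\|\nabla_x\wt u\|_{L^2(B\times(0,\infty),y^{1-2s})}$ by $\|\nabla_x\wt u\|_{L^2(D\times(0,\infty),y^{1-2s})}$ over all solutions $\wt u$; the analytic core — adjoint problem plus unique continuation — is the same.
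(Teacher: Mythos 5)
Your overall skeleton---Hahn--Banach duality, an adjoint extension problem driven by the annihilating functional, deducing that its weighted Neumann trace vanishes on $W$, and then unique continuation---is the same as the paper's (both follow \cite{CGRU2023reduction}). The genuine gap is in the last step, and it traces back to your choice of the target function $v$. You take $v$ to be an arbitrary bump function with $\spt v\subseteq G\cup\Omega_e$ and $\nabla v\not\equiv 0$ on $G\cap B$, and you close the argument by claiming that unique continuation gives $\nabla_{x,y}\wt\varphi\equiv 0$ on a neighbourhood of $\spt\nabla_{x,y}(v\beta_1)$, whence $L(v\beta_1)=B_{\wt\sigma}(\wt\varphi,v\beta_1)=0$. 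This cannot work as stated: $L$ is a functional on $H^1_x((B\cup D)\times(0,\infty),y^{1-2s})$, so the source of your adjoint problem is supported in $(B\cup D)\times(0,\infty)$, and unique continuation from $W$ only yields $\wt\varphi=0$ on $\left(\Omega_e\cup(\Omega\setminus\overline{B\cup D})\right)\times(0,\infty)$; it says nothing about $\wt\varphi$ on $B\times(0,\infty)$. Since the required conclusion $\nabla v|_B\not\equiv 0$ forces $\spt\nabla v$ to meet $B$, the set where $\nabla_{x,y}(v\beta_1)\neq 0$ necessarily intersects the support of the adjoint source, so $B_{\wt\sigma}(\wt\varphi,v\beta_1)$ has no reason to vanish and the duality argument does not close.

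The paper resolves exactly this point by imposing an equation on the target rather than a support condition: $v\in H^1(B\cup D)$ is required to satisfy $\nabla\cdot(\sigma\nabla v)=0$ in $C=B\cup D$, chosen piecewise as a nonconstant solution on $B$ and the constant $1$ on $D$, which is what produces \eqref{property of function v}. With this choice, after moving the adjoint operator onto $v\beta_k$, the interior term $\int w\,\beta_k\,\nabla\cdot(\sigma\nabla v)$ vanishes \emph{inside} $C$ because $v$ solves the equation there and \emph{outside} $C$ because $w=0$ there by the unique continuation step; the residual cross term $\int v\,\p_y\beta_k\,\p_y w$ is then removed by letting $k\to\infty$ along the rescaled family $\beta_k$ constructed in Appendix~\ref{sec: appendix} (this is also why $\beta_1$ cannot be an arbitrary element of $C^\infty_c((0,\infty))$ but is a specific profile normalized by $\int_0^\infty y^{1-2s}\beta_1\,dy=1$). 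To repair your write-up, replace the bump function by such a piecewise solution of the conductivity equation on $B\cup D$ and add the $\beta_k$ limiting argument; the remainder of your outline (adjoint problem, trace identity on $W$, unique continuation to $\left(\Omega_e\cup(\Omega\setminus\overline{C})\right)\times(0,\infty)$) then matches the paper's proof.
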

	
	We will explicitly construct the functions $v(x)$ and $\beta_1(y)$ in the proof of Theorem \ref{thm: density}. Assuming Theorem \ref{thm: density} is true, we can have the following existence of localized potentials.
	
	\begin{corollary}[Localized potentials]\label{Cor: localized potentials}
		Let $B,D\subseteq \overline{\Omega}$ be nonempty measurable sets, $B\setminus \overline{D}$ possess positive measure, and $\overline{\Omega} \setminus  \overline{D}$ is connected to $\p \Omega$. Let $W\Subset \Omega_e$ be a nonempty open subset, then there exists a sequence $\{f_k\}_{k=1}^\infty\subset C^\infty_c(W)$ such that 
		\begin{equation}
			\begin{split}
				\int_{B\times (0,\infty)} y^{1-2s} \left| \nabla \wt u^{f_k}\right|^2 \, dxdy &\to \infty, \\
				\int_{D\times (0,\infty)} y^{1-2s}  \left| \nabla \wt u^{f_k}\right|^2 \, dxdy &\to 0,
			\end{split}
		\end{equation}
		as $k\to \infty$, where $u^{f_k}\in H^1(\R^{n+1}_+,y^{1-2s})$ is the solution to \eqref{equ: extension problem 2} with 
		$$
		\begin{cases}
			\displaystyle\lim_{y\to 0}y^{1-2s}\p_y \wt u^{f_k}=0 &\text{ in }\Omega ,\\
			\wt u^{f_k}(x,0)=f_k(x) & \text{ in }\Omega_e,
		\end{cases}
		$$
		for all $k\in \N$.
	\end{corollary}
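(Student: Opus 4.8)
The plan is to derive Corollary~\ref{Cor: localized potentials} from the Runge approximation Theorem~\ref{thm: density} via the functional-analytic duality argument that is standard for localized potentials (cf.\ \cite{gebauer2008localized}). First I would introduce the relevant bounded operators. By the well-posedness of the exterior problem \eqref{equ: main} together with the continuity of the Caffarelli--Silvestre extension $H^s(\R^n)\to H^1(\R^{n+1}_+,y^{1-2s})$, for every $f\in\wt H^s(W)$ the extension problem \eqref{equ: extension problem 2} has a unique solution $\wt u^f$, depending continuously on $f$. Hence, for any measurable $E\subseteq\overline\Omega$, the map
\[
T_E:\wt H^s(W)\longrightarrow L^2\big(E\times(0,\infty),\,y^{1-2s}\big)^n,\qquad f\longmapsto \nabla\wt u^f\big|_{E\times(0,\infty)},
\]
(with $\nabla=\nabla_x$) is a bounded linear operator between Hilbert spaces, satisfying $\|T_E f\|^2=\int_{E\times(0,\infty)}y^{1-2s}|\nabla\wt u^f|^2\,dxdy$. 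Only the $x$-gradient appears, which is exactly why the $H^1_x$-control supplied by Theorem~\ref{thm: density} is the right notion here.

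Next I would invoke the standard lemma: for bounded linear operators $A_1,A_2$ from a Hilbert space $H_0$ into Hilbert spaces $H_1,H_2$,
\[
\operatorname{Ran}(A_1^{*})\subseteq\operatorname{Ran}(A_2^{*})\iff\exists\,c>0:\ \|A_1 x\|_{H_1}\le c\,\|A_2 x\|_{H_2}\ \text{ for all }x\in H_0 .
\]
Applied to $A_1=T_B$ and $A_2=T_D$: if $\operatorname{Ran}(T_B^{*})\not\subseteq\operatorname{Ran}(T_D^{*})$, then no such constant $c$ exists, so there are $\wt f_k\in\wt H^s(W)$ with $\|T_B\wt f_k\|\ge k$ and $\|T_D\wt f_k\|\le 1/k$, i.e.\ $\|T_D\wt f_k\|\to0$ and $\|T_B\wt f_k\|\to\infty$. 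Since $C_c^\infty(W)$ is dense in $\wt H^s(W)$ and $T_B,T_D$ are continuous, a small perturbation replaces $\wt f_k$ by some $f_k\in C_c^\infty(W)$ without affecting these two limits, which is exactly the sequence asserted in the corollary.

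It remains to verify $\operatorname{Ran}(T_B^{*})\not\subseteq\operatorname{Ran}(T_D^{*})$, equivalently that no bound $\|T_B f\|\le c\,\|T_D f\|$ holds — and here Theorem~\ref{thm: density} enters. It furnishes $v\in H^1(B\cup D)$ with $\nabla v|_{D}\equiv0$ and $\nabla v|_{B}\not\equiv0$, a function $0\not\equiv\beta_1\in C_c^\infty((0,\infty))$, and solutions $\wt u_m$ of \eqref{equ: extension problem 2} — whose exterior data $f_m$ may be taken supported in $W$ — such that $\nabla\wt u_m\to\nabla\big(v(x)\beta_1(y)\big)=(\nabla v)\beta_1$ in $L^2\big((B\cup D)\times(0,\infty),y^{1-2s}\big)$. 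Consequently $\|T_D f_m\|\to\|(\nabla v)\beta_1\|_{L^2(D\times(0,\infty),y^{1-2s})}=0$, whereas by Fubini $\|T_B f_m\|\to\|\nabla v\|_{L^2(B)}\,\|\beta_1\|_{L^2((0,\infty),y^{1-2s})}>0$. This contradicts every inequality of the form $\|T_B f\|\le c\,\|T_D f\|$, so the required non-inclusion holds and the corollary follows from the previous two paragraphs.

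The main obstacle is organizational rather than conceptual. One has to check that $T_B,T_D$ are genuinely well defined and bounded on $\wt H^s(W)$, which rests on the well-posedness theory of Section~\ref{sec: prel} and the mapping properties of the Caffarelli--Silvestre extension; and one must ensure that the approximating solutions delivered by Theorem~\ref{thm: density} can indeed be chosen with exterior data supported in the prescribed set $W$, a point that must be tracked through the proof of that theorem, where it follows from the standard Runge-type localization of the exterior data based on the unique continuation principle for the extension problem.
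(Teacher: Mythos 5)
Your argument is correct, and it reaches the corollary by a different (more abstract) route than the paper. The paper's own proof is direct: Theorem \ref{thm: density} yields solutions $\wt v^{\wt f_k}$ whose $B$-energy $a_k:=\int_{B\times(0,\infty)}y^{1-2s}|\nabla \wt v^{\wt f_k}|^2\,dxdy$ tends to a positive limit and whose $D$-energy $b_k$ tends to $0$ (with $b_k>0$ by UCP), and it then rescales the exterior data by a power of $b_k$ to force the $B$-energy to blow up while the $D$-energy vanishes. You instead introduce the evaluation operators $T_B,T_D$ on $\wt H^s(W)$ and run the classical localized-potentials machinery of \cite{gebauer2008localized}: Theorem \ref{thm: density} is used only to rule out an estimate $\|T_Bf\|\le c\,\|T_Df\|$, and the blow-up sequence then comes from the generic normalization $z_k\mapsto z_k/\sqrt{\|T_Bz_k\|\,\|T_Dz_k\|}$ (the range-inclusion lemma itself is not even needed --- only the trivial implication ``no uniform bound $\Rightarrow$ normalized blow-up sequence''). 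Both routes consume Theorem \ref{thm: density} in exactly the same way, but your packaging has two genuine advantages: the normalization comes out right automatically (note that a direct division of $\wt f_k$ by $\sqrt{b_k}$ would leave the $D$-energy constant equal to $1$ rather than sending it to $0$; one must divide by $b_k^{1/4}$ or use your symmetric normalization), and it isolates the only two analytic facts that need verification, namely the boundedness of $T_E:\wt H^s(W)\to L^2(E\times(0,\infty),y^{1-2s})^n$ (which follows from well-posedness of \eqref{equ: main} and the energy estimate for the extension problem) and the fact that the approximating solutions in Theorem \ref{thm: density} carry exterior data in $C_c^\infty(W)$. The latter is indeed how they are produced in the proof of that theorem (the duality argument tests against $f\in C_c^\infty(W)$), so your final density perturbation from $\wt H^s(W)$ back to $C_c^\infty(W)$ is a safe, if dispensable, extra step.
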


	\begin{proof}
		By using Theorem \ref{thm: density}, there exist functions $v\in H^1(B\cup \overline{D})$, $0\neq \beta_1 \in C^\infty_c((0,\infty))$, and sequence of solutions $\wt v_{\wt f_k}\in H^1(\R^{n+1}_+, y^{1-2s})$ of 
		\begin{equation}\label{equ: extension problem 3}
			\begin{cases}
				\nabla_{x,y}  \cdot (  y^{1-2s}\wt \sigma  \nabla_{x,y} \wt v^{\wt f_k}  )  =0 &\text{ in }\R^{n+1}_+,\\
				\displaystyle\lim_{y\to 0}y^{1-2s}\p_y  \wt v^{\wt f_k}=0 &\text{ on }\Omega \times \{0\}, \\
				\wt v^{\wt f_k}(x,0)=\wt f_k(x) &\text{ on }\Omega_e\times \{0\},
			\end{cases}
		\end{equation}
		for all $k\in \N$, such that
		\begin{equation}
			\int_{B\times (0,\infty)}y^{1-2s}\big| \nabla \wt v^{\wt f_k}\big|^2 \, dxdy \rightarrow  \int_{B\times (0,\infty)}y^{1-2s} \beta_1^2\left| \nabla v\right|^2 \, dxdy > 0,
		\end{equation}
		and 
		\begin{equation}
			\int_{D\times (0,\infty)}y^{1-2s}\big| \nabla \wt v^{\wt f_k}\big|^2 \, dxdy \rightarrow  \int_{D\times (0,\infty)}y^{1-2s}\beta_1^2\left| \nabla v\right|^2 \, dxdy = 0.
		\end{equation}
		Notice that the value $\int_{D\times (0,\infty)}y^{1-2s}\big| \nabla \wt v^{\wt f_k}\big|^2 \, dxdy>0$ for all $k\in \N$ thanks to the UCP of \eqref{equ: extension problem 2}.
		Let 
		$$
		f_k := \frac{\wt f_k}{\sqrt{\int_{D\times (0,\infty)}y^{1-2s}\big| \nabla \wt v^{\wt f_k}\big|^2 \, dxdy}},
		$$ 
		then $\wt u^{f_k}=\frac{\wt v^{\wt f_k}}{\sqrt{\int_{D\times (0,\infty)}y^{1-2s}\big| \nabla \wt v^{\wt f_k}\big|^2 \, dxdy}}$ solves \eqref{equ: extension problem 2} with the boundary data $f=f_k\in C^\infty_c(W)$, for $k\in \N$. This proves the assertion.		
	\end{proof}

	Now, it remains to prove Theorem \ref{thm: density}.

	\begin{proof}[A formal proof of Theorem \ref{thm: density}]
		Let us consider the case $s=1/2$ and $\sigma=\mathrm{I}_n$ (the $n\times n$ identity matrix). For simplicity, let us define $C:=B\cup D \subseteq \overline{\Omega}$, such that $\overline{\Omega}\setminus \overline{C}$ is connected to $\p \Omega$. Without loss of generality, we may assume that $\p C$ is Lipschitz. Given $\psi \in \wt H^{-1}(C)$, by the duality argument of the Hahn-Banach theorem, we only need to claim 
		\begin{equation}\label{duality 0}
			\int_0^\infty \left\langle \wt u^f(\cdot, y) , \psi (\cdot)\right\rangle_C \, dy=0
		\end{equation}
		implies 
		\begin{equation}\label{duality 0-1}
			\int_0^\infty \big\langle  \wt \beta_1(y) v(\cdot) , \psi(\cdot) \big\rangle_C \, dy=0 ,
		\end{equation}
		where $\langle \cdot, \cdot\rangle _C$ denotes the duality pairing between $H^1(C)$ and its dual space $\wt H^{-1}(C)$\footnote{Note that the \eqref{duality 0} is equivalent to $\int_C \big(\int_0^\infty \wt u^f(x, y) \, dy \big) \psi(x) \, dx $ and \eqref{duality 0-1} is equivalent to $\int_C \big( \int_0^\infty \wt \beta_1(y)v(x) \, dy \big) \psi(x) \, dx$.}. 
		Here the function $v\in H^1(C)$ will satisfy \eqref{property of function v}, and $\wt \beta_1 \in C^\infty_c((0,\infty))$ fulfills $\int_0^\infty \wt \beta_1(y)\, dy=1$, $\wt\beta_1 \geq 0$, and  $\supp\big( \wt \beta_1 \big) \subset (1,2)$.
		Moreover, let us define $\wt\beta_k(y)=k^{-1}\wt\beta_1(y/k)$ for all $k\in \N$ and $y>0$, then there holds $\int_0^\infty \wt \beta_k(y) \, dy =1$ as well.
		
		To conclude the Hahn-Banach argument, let us consider the adjoint problem 
		\begin{equation}\label{adjoint problem formal}
			\begin{cases}
				\Delta_{x,y} w =\psi &\text{ in }\R^{n+1}_+, \\
				\displaystyle \lim_{y\to 0}\p_y w =0 & \text{ in }\Omega \times \{0\}, \\
				w=0 &\text{ in }\Omega_e \times \{0\}.			
			\end{cases}
		\end{equation}
		Since $C\subseteq\overline{\Omega}$ is a measurable set with $C\cap \p \Omega\neq \emptyset$, note that  
		\begin{equation}
			\bigg| \int_0^\infty \left\langle \wt u_f , \psi \right\rangle \, dy \bigg| \leq \norm{\psi}_{\wt H^{-1}(C)}\int_0^\infty \left\| \wt u_f (\cdot, y \right\|_{H^1(C)}\, dy<\infty ,
		\end{equation}
		then the assumption in \eqref{duality 0} can be rewritten as 
		\begin{equation}
			\begin{split}
				0&=	\int_0^\infty \left\langle \wt u^f , \psi \right\rangle_{C} \, dy\\
				&= \int_0^\infty \int_{\R^n} \LC \Delta_{x,y} w \RC  \wt u^f\, dxdy \\
				& = \underbrace{\int_{\R^n} w(x,0) \big( \lim_{y\to 0}\p_y \wt u^f) \, dx}_{:=(I)} + \underbrace{ \int_0^\infty \int_{\R^n} w \LC \Delta_{x,y}\wt u^f \RC dxdy}_{=0 \text{ since $\wt u^f$ solves \eqref{equ: extension problem 2}}}\\
				&\quad \, -\int_{\R^n} \big( \lim_{y\to 0}\p_y w \big) \wt u^f(x,0) \, dx  \\
				&= -\int_{W} \big( \lim_{y\to 0}\p_y w \big) f \, dx ,
			\end{split}
		\end{equation}
		where we used both boundary information of $\wt u^f$ and $w$ from the equations \eqref{equ: extension problem 2} and \eqref{adjoint problem formal} such that the term $(I)=0$. Since $f\in C^\infty_c(W)$ is arbitrary, there must hold that $\lim_{y\to 0}\p_y w=0$ in $W$. Thanks to the (weak) UCP and $\overline{\Omega}\setminus \overline{C}$ is connected to $\p \Omega$, then it follows that 
		$$
		w\equiv 0 \text{ in }\LC \Omega_e \cup (\Omega \setminus \overline{C}) \RC \times (0,\infty).
		$$		
		Particularly, there holds that
		\begin{equation}\label{boundary data w =0}
			w|_{\LC \p \Omega \cup \p C\RC \times (0,\infty) }=\left. \p_\nu w\right|_{\LC \p \Omega \cup \p C\RC \times (0,\infty) }=0,
		\end{equation}
		and $\lim_{y\to 0}\p_y w =0$ in $\R^n$.
		
		We can conclude the proof by taking the function $v\in H^1(C)$, which can be extended to an $H^1(\R^n)$ function and satisfies 
		\begin{equation}\label{equation v formal}
			\Delta v =0 \text{ in }C.
		\end{equation} 
		Similar to \cite[Section 3]{CGRU2023reduction}, we have  
		\begin{equation}\label{HB comp formal}
			\begin{split}
				-\langle \psi, v\rangle_C &= - \bigg\langle \psi, \int_0^\infty \wt\beta_k(y)v\, dy   \bigg\rangle_C\\
				&= -\lim_{k\to \infty} \bigg\langle \psi, \int_0^\infty \wt\beta_k(y)v\, dy   \bigg\rangle_C \\
				&=\underbrace{- \lim_{k\to \infty}\int_{\Omega \times (0,\infty)} \wt \beta_k w \Delta v\, dxdy}_{:=(II)}+ \lim_{k\to \infty}\int_{\Omega \times (0,\infty)} v \p_y \wt \beta_k \p_y w\, dxdy\\
				&= \lim_{k\to \infty} k^{-2}\int_{\Omega \times (k,2k)} v \p_y  \wt\beta_1 \p_y w \, dxdy\\
				&=0,
			\end{split}
		\end{equation}
		where we also used \eqref{boundary data w =0} to get rid of boundary contributions. The term $(II)=0$ thanks to the facts $\Delta v =0$ in $C$ and $w=0$ in $\LC \Omega \setminus \overline{C}\RC \times (0,\infty)$.

		Finally, since $v\in H^1(C)$ is a solution to \eqref{equation v formal} with $C=B\cup D$ and $\overline{B}\cap \overline{D}=\emptyset$, one can simply take $v$ to be piecewise defined solution, which is a nonconstant solution in $B$, and $v=1$ in $D$. Therefore, with these special choices at hand, we can imply that \eqref{property of function v} holds. This demonstrates the ideas of the proof of the density, which is very similar to the case shown in \cite[Section 3]{CGRU2023reduction}.		
	\end{proof}

	\begin{remark}
		From the first identity in \eqref{HB comp formal}, we can see that 
		$$
		\bigg\langle \psi, \int_0^\infty \wt\beta_k(y)v\, dy   \bigg\rangle_C=0, \text{ for all } k\in \N,
		$$ 
		which, of course, holds as $k=1$. The rigorous proof will follow the same idea and approach as in the formal one.
	\end{remark}

	The rigorous proof can be concluded by introducing suitable cutoff functions in both $x$ and $y$ directions, given as in \cite[Section 3]{CGRU2023reduction}.

	\begin{proof}[Proof of Theorem \ref{thm: density}]
		The argument is similar to the rigorous proof \cite[Proposition 3.1]{CGRU2023reduction}.
		As in the formal proof, let $C=B\cup D$ with $C\cap \p \Omega \neq \emptyset$, and $\psi \in \wt H^{-1}(C)$. Then we have 
		\begin{equation}
			\bigg| \int_0^\infty y^{1-2s}\left\langle \wt u^f (\cdot, y),\psi (\cdot) \right\rangle_C \, dy \bigg|\leq \norm{\psi}_{\wt H^{-1}(C)} \norm{\wt u_f}_{H^1(C,y^{1-2s})}<\infty,
		\end{equation}
		and by the duality argument of the Hahn-Banach theorem, it suffices to claim that 
		\begin{equation}\label{duality 1}
			\int_0^\infty y^{1-2s}\left\langle \wt u^f (\cdot, y),\psi (\cdot) \right\rangle_C \, dy =0 \quad 
		\end{equation}
		implies 
		\begin{equation}\label{duality 2}
			\int_0^\infty y^{1-2s}\left\langle \beta_1(y) v(\cdot),\psi(\cdot)\right\rangle_C \, dy=0,
		\end{equation}
		where $\langle \cdot, \cdot\rangle _C$ denotes the duality pairing between $H^1(C)$ and its dual space $\wt H^{-1}(C)$\footnote{Note that the \eqref{duality 1} is equivalent to $\int_C \big(\int_0^\infty y^{1-2s}\wt u^f(x, y) \, dy \big) \psi(x) \, dx $ and \eqref{duality 2} is equivalent to $\int_C \big( \int_0^\infty y^{1-2s} \beta_1(y)v(x) \, dy \big) \psi(x) \, dx$.}. As we shall see, the function $v\in H^1(C)$ could be an arbitrary function satisfying 
		\begin{equation}\label{equation v}
			\nabla \cdot (\sigma\nabla v)=0  \text{ in }C.
		\end{equation}
		Moreover, $\beta_1$ is a suitable cutoff function with respect to the $y$-direction, which will be given in Appendix \ref{sec: appendix}.

		To this end, consider the adjoint problem 
		\begin{equation}\label{adjoint problem}
			\begin{cases}
				\nabla_{x,y}\cdot \LC y^{1-2s}\wt \sigma \nabla_{x,y} w \RC = y^{1-2s}\psi &\text{ in }\R^{n+1}_+, \\
				\displaystyle \lim_{y\to 0}y^{1-2s}\p_y w =0 & \text{ in }\Omega \times \{0\}, \\
				w=0 &\text{ in }\Omega_e \times \{0\}.			
			\end{cases}
		\end{equation}
		The solvability of the above problem \eqref{adjoint problem} can be found in \cite[Section 3]{CGRU2023reduction}, so we do not give further details about it. 
		The rest of the argument is completely the same as the proof of \cite[Proposition 3.1]{CGRU2023reduction}, with suitable cutoff function arguments in both $x$ and $y$ directions; by using \eqref{duality 1}, one can obtain 
		\begin{equation}
			\int_W f\lim_{y\to 0 } y^{1-2s}\p_y w \, dx=0, \quad \text{for any }f\in C^\infty_c(W).
		\end{equation}
		This implies that $\lim_{y\to 0}y^{1-2s}\p_y w =0$ in $W$. Combining with $w=0$ in $W\times \{0\}$ and $\overline{\Omega} \setminus \overline{C}$ is connected to $\p \Omega$, the UCP implies that 
		$$
		w=0 \text{ in } \LC \Omega_e \cup \LC \Omega \setminus \overline{C}\RC \RC \times (0,\infty).
		$$ 
		This particularly implies also implies that $w=\p_\nu w =0 $ on $\LC \p \Omega \cup \p C \RC \times (0,\infty)$, and $\lim_{y\to 0}y^{1-2s}\p_yw =0$ in $\R^n$.

		Finally, given any function $v\in H^1(C)$ that satisfies \eqref{equation v}, then the same arguments as in the proof of \cite[Proposition 3.1]{CGRU2023reduction} and \eqref{HB comp formal}, the identity \eqref{duality 2} must hold. In other words, for any solution $v\in H^1(C)=H^1(B\cup D)$ to \eqref{equation v}, the function $v(x)\beta_1(y)$ can be approximated by solutions $\wt u$ of \eqref{equ: extension problem} in $H^1(B\cup D, y^{1-2s})$, then this concludes the proof.		
	\end{proof}
	
	\begin{remark}
		In the proof of \cite[Proposition 3.1]{CGRU2023reduction}, there are more (smooth) cutoff functions involved. However, the proofs of Theorem \ref{thm: density} and \cite[Proposition 3.1]{CGRU2023reduction} are the same, and there are no major changes to the whole argument. We skip the details.
	\end{remark}

	\section{Proof of the local uniqueness}\label{sec: thm}
	
	We are now ready to prove Theorem \ref{thm: local}.
	
	\begin{proof}[Proof of Theorem \ref{thm: local}]
		We want to claim $\sigma_1 =\sigma_2$ in $\mathcal{O}$ by using the monotonicity relation and localized potentials. Since the condition \eqref{pointwise mono assumption} holds, we may assume $\sigma_1 -\sigma_2 \geq 0$ in $\mathcal{O}$, and there exists $B\subseteq \mathcal{O}$ such that 
		\begin{equation}\label{mono sigma1-sigma2}
			\sigma_1 -\sigma_2\geq \delta>0 \text{ in } B,
		\end{equation}
		for some $\delta>0$. Additionally, we can assume $D:=\Omega \setminus \overline{\mathcal{O}}$, such that $B \setminus \overline D$ possesses positive measure since $\overline{B}\cap \overline{D}=\emptyset$, and $\overline{\Omega}\setminus   \overline{D}$ is connected to $\p \Omega$.

		Next, the monotonicity relation \eqref{mono lemma 0} yields that 
		\begin{equation}\label{pf of local 1}
			\begin{split}
				0 &= d_s \left\langle (\Lambda_{\sigma_1}-\Lambda_{\sigma_2}) f_k , f_k \right\rangle  \\
				& \geq \int_{\Omega \times (0,\infty)} y^{1-2s}\frac{\sigma_2}{\sigma_1}\LC \sigma_1-\sigma_2\RC \big| \nabla \wt u_2^{f_k} \big|^2 \, dxdy \\
				&\geq -C \int_{D\times (0,\infty)} y^{1-2s} \big| \nabla \wt u_2^{f_k} \big|^2 \, dxdy +c\delta \int_{B\times (0,\infty)}y^{1-2s} \big| \nabla \wt u^{f_k}_2 \big|^2 \, dxdy,
			\end{split}
		\end{equation}
		for some constants $c,C>0$ independent of $\wt u_2^{f_k}$, where $\big\{ \wt u_2^{f_k}\big\}_{k\in \N} \subset H^1(\R^{n+1}_+, y^{1-2s})$ are localized potentials constructed by Corollary \ref{Cor: localized potentials}. Here 
		$C:= \big\| \frac{\sigma_2}{\sigma_1} \LC \sigma_1 -\sigma_2 \RC \big\|_{L^\infty(\Omega)}$ and $c:=\min_{\overline{\Omega}} \frac{\sigma_2}{\sigma_1}>0$. Hence, taking $k\to \infty$ in the inequality \eqref{pf of local 1}, we can obtain
		\begin{equation}
			\begin{split}
				0 \geq -C \underbrace{\int_{D\times (0,\infty)} y^{1-2s} \big| \nabla \wt u^{f_k}_2 \big|^2 \, dxdy}_{\to 0 \text{ as }k\to \infty} +c\delta \underbrace{\int_{B\times (0,\infty)}y^{1-2s} \big| \nabla \wt u^{f_k}_2 \big|^2 \, dxdy}_{\to \infty \text{ as }k\to \infty} \rightarrow +\infty,
			\end{split}
		\end{equation}
		as $k\to \infty$, which leads to a contradiction. Therefore, the inequality \eqref{mono sigma1-sigma2} cannot be true, which implies that $\sigma_1\leq \sigma_2$ in $\mathcal{O}$.

		Similarly, when $\sigma_1-\sigma_2\leq 0$ in $\mathcal{O}$, we may assume that $\sigma_1-\sigma_2 \leq -\delta'<0$ in $B\subsetneq \mathcal{O}$ for some $\delta'>0$, this will lead a contradiction too. More concretely, with the same notations as before, one can use the monotonicity relation \eqref{mono lemma 0} again, 
		\begin{equation}
			\begin{split}
				0  &\leq \int_{\Omega \times (0,\infty)} y^{1-2s}\LC \sigma_1-\sigma_2\RC \big| \nabla \wt u_2^{f_k} \big|^2 \, dxdy \\
				& \leq C \int_{D\times (0,\infty)}  y^{1-2s}\big| \nabla \wt u_2^{f_k} \big|^2 \, dxdy -\delta'  \int_{B\times (0,\infty)}y^{1-2s}\big| \nabla \wt u_2^{f_k}\big|^2 \, dxdy \\
				& \to -\infty,
			\end{split}
		\end{equation}
		which leads to a contradiction, too. Thus, $\sigma_1 =\sigma_2$ in $B$, concluding the proof.		
	\end{proof}

	\begin{remark}
		It would also be interesting to use the monotonicity approach to study the Lipschitz stability for piecewise analytic coefficients and inverse obstacle problems as we mentioned earlier.
	\end{remark}

	\appendix
	
	\section{The auxiliary function}\label{sec: appendix}
	
	The function introduced in this work, $\beta_1=\beta_1(y)$, was constructed in the proof of \cite[Proposition 3.1]{CGRU2023reduction}. For the sake of completeness, let us collect the existing construction of $\beta_k$ ($k\in \N$) as follows: Given $b\in (0,1)$, let $\gamma_b:(0,\infty)\to (0,b)$ be a smooth function with $\supp \LC \gamma_b \RC \subset [0,\frac{2-b}{1-b}]$ and $\gamma_b(y)=b$ for $y\in [1,\frac{1}{1-b}]$. One can also assume that 
	$$\int_0^\infty \gamma_b(y)\, dy=\frac{b}{1-b} \quad \text{and} \quad \left| \p_y^\ell \gamma_b(y)\right| \leq C,
	$$
	for $\ell=0,1,2$, where $C>0$ is a constant independent of $b\in (0,1)$. Let 
	\begin{equation}
		\begin{split}
			I_{b,k}&:= \int_0^\infty y^{1-2s}\gamma_b(y-k)\, dy \\
			&\ =\int_0^{\frac{2-b}{1-b}} (y+k)^{1-2s}\gamma_b(y)\, dy\\
			& \ \in  \begin{cases}
				\frac{bk^{1-2s}}{1-b}\LC 1, \LC 1+ \frac{2-b}{k(1-b)}\RC^{1-2s}\RC, \quad \text{if }s\in (0,\frac{1}{2}],\\
				\frac{bk^{1-2s}}{1-b}\LC \LC 1+ \frac{2-b}{k(1-b)}\RC^{1-2s}, 1 \RC, \quad \text{if }s\in (\frac{1}{2},1)
			\end{cases}
		\end{split}
	\end{equation}
	where $I_{b,k}$ depends continuously on $b\in (0,1)$. From the above analysis, it is evident that the values attained by $I_{b,k}$ can range from arbitrarily large to arbitrarily close to 0. By the continuity of for all $k\in \N$, there exist $b_{k,s}\in (0,1)$ such that $I_{b_{s,k},k}=1$. Now, let 
	\[
	\beta_k(y):= \gamma_{b_{k,s}}(y-k) \quad \text{and}\quad R_{k,s}:=k+\frac{1}{1-b_{k,s}},
	\]
	then $\beta_k: (0,\infty)\to [0,1]$ satisfies 
	\begin{equation}
		\begin{split}
			&\supp\LC \beta_k \RC \subset \LC k, R_{k,s}+1\RC , \\
			&\beta_k(y)=b_{k,s}, \text{ for }y\in (k+1,R_{k,s}),\\
			&\left| \p^\ell_y \beta_k(y) \right| \leq C, \\
			&\int_0^\infty y^{1-2s}\beta_k(y)\, dy =1,
		\end{split}
	\end{equation}
	for $\ell=0,1,2$, and for some constant $C>0$ independent of $k\in \N$.

	\section*{Statements and Declarations}
	
	\subsection*{Data availability statement}
	No datasets were generated or analyzed during the current study.
	
	\subsection*{Conflict of Interests} Hereby, we declare there are no conflicts of interest.

	\bigskip

	\noindent\textbf{Acknowledgments.} 
	The author is partially supported by the National Science and Technology Council (NSTC) Taiwan, under the projects 113-2628-M-A49-003 \& 113-2115-M-A49-017-MY3. The author is also a Humboldt research fellow (for experienced researchers) in Germany.

	\bibliography{refs}

\newcommand{\etalchar}[1]{$^{#1}$}
\begin{thebibliography}{VMC{\etalchar{+}}17}

\bibitem[AH13]{arnold2013unique}
Lilian Arnold and Bastian Harrach.
\newblock Unique shape detection in transient eddy current problems.
\newblock {\em Inverse Problems}, 29(9):095004, 19, 2013.

\bibitem[BHHM17]{barth2017detecting}
Andrea Barth, Bastian Harrach, Nuutti Hyv\"onen, and Lauri Mustonen.
\newblock Detecting stochastic inclusions in electrical impedance tomography.
\newblock {\em Inverse Problems}, 33(11):115012, 18, 2017.

\bibitem[BHKS18]{brander2018monotonicity}
Tommi Brander, Bastian Harrach, Manas Kar, and Mikko Salo.
\newblock Monotonicity and enclosure methods for the {$p$}-{L}aplace equation.
\newblock {\em SIAM J. Appl. Math.}, 78(2):742--758, 2018.

\bibitem[BV16]{BV_16}
Claudia Bucur and Enrico Valdinoci.
\newblock {\em Nonlocal diffusion and applications}, volume~20 of {\em Lecture
  Notes of the Unione Matematica Italiana}.
\newblock Springer, [Cham]; Unione Matematica Italiana, Bologna, 2016.

\bibitem[CGRU23]{CGRU2023reduction}
Giovanni Covi, Tuhin Ghosh, Angkana R{\"u}land, and Gunther Uhlmann.
\newblock A reduction of the fractional {C}alder\'on problem to the local
  {C}alder\'on problem by means of the {C}affarelli-{S}ilvestre extension.
\newblock {\em arXiv preprint arXiv:2305.04227}, 2023.

\bibitem[CLL19]{CLL2017simultaneously}
Xinlin Cao, Yi-Hsuan Lin, and Hongyu Liu.
\newblock Simultaneously recovering potentials and embedded obstacles for
  anisotropic fractional {S}chr\"{o}dinger operators.
\newblock {\em Inverse Probl. Imaging}, 13(1):197--210, 2019.

\bibitem[CLR20]{cekic2020calderon}
Mihajlo Cekic, Yi-Hsuan Lin, and Angkana R{\"u}land.
\newblock The {C}alder{\'o}n problem for the fractional {S}chr{\"o}dinger
  equation with drift.
\newblock {\em Cal. Var. Partial Differential Equations}, 59(91), 2020.

\bibitem[CMRU22]{CMRU20}
Giovanni Covi, Keijo M\"{o}nkk\"{o}nen, Jesse Railo, and Gunther Uhlmann.
\newblock The higher order fractional {C}alder\'{o}n problem for linear local
  operators: {U}niqueness.
\newblock {\em Adv. Math.}, 399:Paper No. 108246, 2022.

\bibitem[CS07]{CS_extension}
Luis Caffarelli and Luis Silvestre.
\newblock An extension problem related to the fractional {L}aplacian.
\newblock {\em Comm. Partial Differential Equations}, 32(7-9):1245--1260, 2007.

\bibitem[Dav90]{Davies90}
Edward~Brian Davies.
\newblock {\em Heat kernels and spectral theory}, volume~92 of {\em Cambridge
  Tracts in Mathematics}.
\newblock Cambridge University Press, Cambridge, 1990.

\bibitem[Fei24]{Fei24_TAMS}
Ali Feizmohammadi.
\newblock Fractional {C}alder\'on problem on a closed {R}iemannian manifold.
\newblock {\em Trans. Amer. Math. Soc.}, 377(4):2991--3013, 2024.

\bibitem[FGK{\etalchar{+}}25]{FGKRSU25}
Ali Feizmohammadi, Tuhin Ghosh, Katya Krupchyk, Angkana R{\"u}land, Johannes
  Sj{\"o}strand, and Gunther Uhlmann.
\newblock Fractional anisotropic {C}alder\'{o}n problem with external data.
\newblock {\em arXiv preprint arXiv:2502.00710}, 2025.

\bibitem[FGKU24]{feizmohammadi2021fractional}
Ali Feizmohammadi, Tuhin Ghosh, Katya Krupchyk, and Gunther Uhlmann.
\newblock Fractional anisotropic {C}alder\'on problem on closed {R}iemannian
  manifolds.
\newblock {\em J. Diff. Geom., to appear}, 2024.

\bibitem[FKU24]{FKU2024calder}
Ali Feizmohammadi, Katya Krupchyk, and Gunther Uhlmann.
\newblock Calder\'on problem for fractional {S}chr\"odinger operators on closed
  {R}iemannian manifolds.
\newblock {\em arXiv preprint arXiv:2407.16866}, 2024.

\bibitem[FL24]{FL24}
Ali Feizmohammadi and Yi-Hsuan Lin.
\newblock Entanglement principle for the fractional {L}aplacian with
  applications to inverse problems.
\newblock {\em arXiv preprint arXiv:2412.13118}, 2024.

\bibitem[Gar17]{garde2017comparison}
Henrik Garde.
\newblock Comparison of linear and non-linear monotonicity-based shape
  reconstruction using exact matrix characterizations.
\newblock {\em Inverse Problems in Science and Engineering}, pages 1--18, 2017.

\bibitem[Geb08]{gebauer2008localized}
Bastian Gebauer.
\newblock Localized potentials in electrical impedance tomography.
\newblock {\em Inverse Probl. Imaging}, 2(2):251--269, 2008.

\bibitem[GH18]{griesmaier2018monotonicity}
Roland Griesmaier and Bastian Harrach.
\newblock Monotonicity in inverse medium scattering on unbounded domains.
\newblock {\em SIAM J. Appl. Math.}, 78(5):2533--2557, 2018.

\bibitem[GLX17]{GLX}
Tuhin Ghosh, Yi-Hsuan Lin, and Jingni Xiao.
\newblock The {C}alder\'{o}n problem for variable coefficients nonlocal
  elliptic operators.
\newblock {\em Comm. Partial Differential Equations}, 42(12):1923--1961, 2017.

\bibitem[GRSU20]{GRSU20}
Tuhin Ghosh, Angkana R\"{u}land, Mikko Salo, and Gunther Uhlmann.
\newblock Uniqueness and reconstruction for the fractional {C}alder\'{o}n
  problem with a single measurement.
\newblock {\em J. Funct. Anal.}, 279(1):108505, 42, 2020.

\bibitem[GS17]{garde2017convergence}
Henrik Garde and Stratos Staboulis.
\newblock Convergence and regularization for monotonicity-based shape
  reconstruction in electrical impedance tomography.
\newblock {\em Numerische Mathematik}, 135(4):1221--1251, 2017.

\bibitem[GS19]{garde2019regularized}
Henrik Garde and Stratos Staboulis.
\newblock The regularized monotonicity method: detecting irregular indefinite
  inclusions.
\newblock {\em Inverse Probl. Imaging}, 13(1):93--116, 2019.

\bibitem[GSU20]{GSU20}
Tuhin Ghosh, Mikko Salo, and Gunther Uhlmann.
\newblock The {C}alder\'{o}n problem for the fractional {S}chr\"{o}dinger
  equation.
\newblock {\em Anal. PDE}, 13(2):455--475, 2020.

\bibitem[Har09]{harrach2009uniqueness}
Bastian Harrach.
\newblock On uniqueness in diffuse optical tomography.
\newblock {\em Inverse Problems}, 25(5):055010, 14, 2009.

\bibitem[Har12]{harrach2012simultaneous}
Bastian Harrach.
\newblock Simultaneous determination of the diffusion and absorption
  coefficient from boundary data.
\newblock {\em Inverse Probl. Imaging}, 6(4):663--679, 2012.

\bibitem[HL19]{HL19_monotone1}
Bastian Harrach and Yi-Hsuan Lin.
\newblock Monotonicity-based inversion of the fractional {S}chr\"odinger
  equation {I}. {P}ositive potentials.
\newblock {\em SIAM J. Math. Anal.}, 51(4):3092--3111, 2019.

\bibitem[HL20]{HL20_monotone2}
Bastian Harrach and Yi-Hsuan Lin.
\newblock Monotonicity-based inversion of the fractional {S}ch\"odinger
  equation {II}. {G}eneral potentials and stability.
\newblock {\em SIAM J. Math. Anal.}, 52(1):402--436, 2020.

\bibitem[HLL18]{harrach2018localizing}
Bastian Harrach, Yi-Hsuan Lin, and Hongyu Liu.
\newblock On localizing and concentrating electromagnetic fields.
\newblock {\em SIAM J. Appl. Math.}, 78(5):2558--2574, 2018.

\bibitem[HLU15]{harrach2015combining}
Bastian Harrach, Eunjung Lee, and Marcel Ullrich.
\newblock Combining frequency-difference and ultrasound modulated electrical
  impedance tomography.
\newblock {\em Inverse Problems}, 31(9):095003, 25, 2015.

\bibitem[HLW25]{HLW_2024_log}
Bastian Harrach, Yi-Hsuan Lin, and Tobias Weth.
\newblock The {C}alder\'on problem for the logarithmic {S}chr\"odinger
  equation.
\newblock {\em J. Differential Equations}, 444:113665, 2025.

\bibitem[HM16]{harrach2016enhancing}
Bastian Harrach and Mach~Nguyet Minh.
\newblock Enhancing residual-based techniques with shape reconstruction
  features in electrical impedance tomography.
\newblock {\em Inverse Problems}, 32(12):125002, 21, 2016.

\bibitem[HM18]{harrach2018monotonicity}
Bastian Harrach and Mach~Nguyet Minh.
\newblock Monotonicity-based regularization for phantom experiment data in
  electrical impedance tomography.
\newblock In {\em New Trends in Parameter Identification for Mathematical
  Models}, pages 107--120. Springer, 2018.

\bibitem[HPS19a]{harrach2019dimension}
Bastian Harrach, Valter Pohjola, and Mikko Salo.
\newblock Dimension bounds in monotonicity methods for the {H}elmholtz
  equation.
\newblock {\em SIAM J. Math. Anal.}, 51(4):2995--3019, 2019.

\bibitem[HPS19b]{harrach2018helmholtz}
Bastian Harrach, Valter Pohjola, and Mikko Salo.
\newblock Monotonicity and local uniqueness for the {H}elmholtz equation.
\newblock {\em Anal. PDE}, 12(7):1741--1771, 2019.

\bibitem[HS10]{harrach2010exact}
Bastian Harrach and Jin~Keun Seo.
\newblock Exact shape-reconstruction by one-step linearization in electrical
  impedance tomography.
\newblock {\em SIAM J. Math. Anal.}, 42(4):1505--1518, 2010.

\bibitem[HU13]{harrach2013monotonicity}
Bastian Harrach and Marcel Ullrich.
\newblock Monotonicity-based shape reconstruction in electrical impedance
  tomography.
\newblock {\em SIAM J. Math. Anal.}, 45(6):3382--3403, 2013.

\bibitem[HU15]{harrach2015resolution}
Bastian Harrach and Marcel Ullrich.
\newblock Resolution guarantees in electrical impedance tomography.
\newblock {\em IEEE Trans. Med. Imaging}, 34:1513--1521, 2015.

\bibitem[HU17]{harrach2017local}
Bastian Harrach and Marcel Ullrich.
\newblock Local uniqueness for an inverse boundary value problem with partial
  data.
\newblock {\em Proc. Amer. Math. Soc.}, 145(3):1087--1095, 2017.

\bibitem[KLW22]{KLW2021calder}
Pu-Zhao Kow, Yi-Hsuan Lin, and Jenn-Nan Wang.
\newblock The {C}alder\'{o}n problem for the fractional wave equation:
  uniqueness and optimal stability.
\newblock {\em SIAM J. Math. Anal.}, 54(3):3379--3419, 2022.

\bibitem[Lin22]{lin2020monotonicity}
Yi-Hsuan Lin.
\newblock Monotonicity-based inversion of fractional semilinear elliptic
  equations with power type nonlinearities.
\newblock {\em Calc. Var. Partial Differential Equations}, 61(5):Paper No. 188,
  30, 2022.

\bibitem[Lin24]{lin2024fractional}
Yi-Hsuan Lin.
\newblock The fractional anisotropic {C}alder\'on problem for a nonlocal
  parabolic equation on closed {R}iemannian manifolds.
\newblock {\em arXiv preprint arXiv:2410.17750}, 2024.

\bibitem[LL22]{LL2020inverse}
Ru-Yu Lai and Yi-Hsuan Lin.
\newblock Inverse problems for fractional semilinear elliptic equations.
\newblock {\em Nonlinear Anal.}, 216:Paper No. 112699, 21, 2022.

\bibitem[LL23]{LL2022inverse}
Yi-Hsuan Lin and Hongyu Liu.
\newblock Inverse problems for fractional equations with a minimal number of
  measurements.
\newblock {\em Commun. Anal. Comput.}, 1(1):72--93, 2023.

\bibitem[LL25]{LL25_IPBook}
Yi-Hsuan Lin and Hongyu Liu.
\newblock {\em Inverse {P}roblems for {I}ntegro-differential {O}perators},
  volume 222 of {\em Applied Mathematical Sciences}.
\newblock Springer, Cham, 2025.

\bibitem[LLU23]{LLU2023calder}
Ching-Lung Lin, Yi-Hsuan Lin, and Gunther Uhlmann.
\newblock The {C}alder\'{o}n problem for nonlocal parabolic operators: {A} new
  reduction from the nonlocal to the local.
\newblock {\em arXiv preprint arXiv:2308.09654}, 2023.

\bibitem[LNZ24]{LNZ_Calderon}
Yi-Hsuan Lin, Gen Nakamura, and Philipp Zimmermann.
\newblock The {C}alder\'on problem for the schr\"odinger equation in
  transversally anisotropic geometries with partial data.
\newblock {\em arXiv preprint arXiv:2408.08298}, 2024.

\bibitem[LZ23]{LZ2023unique}
Yi-Hsuan Lin and Philipp Zimmermann.
\newblock Unique determination of coefficients and kernel in nonlocal porous
  medium equations with absorption term.
\newblock {\em arXiv preprint arXiv:2305.16282}, 2023.

\bibitem[LZ24]{LZ2024approximation}
Yi-Hsuan Lin and Philipp Zimmermann.
\newblock Approximation and uniqueness results for the nonlocal diffuse optical
  tomography problem.
\newblock {\em arXiv preprint arXiv:2406.06226}, 2024.

\bibitem[MVVT16]{maffucci2016novel}
Antonio Maffucci, Antonio Vento, Salvatore Ventre, and Antonello Tamburrino.
\newblock A novel technique for evaluating the effective permittivity of
  inhomogeneous interconnects based on the monotonicity property.
\newblock {\em IEEE Transactions on Components, Packaging and Manufacturing
  Technology}, 6(9):1417--1427, 2016.

\bibitem[RO16]{Ros-Oton_16}
Xavier Ros-Oton.
\newblock Nonlocal elliptic equations in bounded domains: a survey.
\newblock {\em Publ. Mat.}, 60(1):3--26, 2016.

\bibitem[RS18]{ruland2018exponential}
Angkana R\"{u}land and Mikko Salo.
\newblock Exponential instability in the fractional {C}alder\'{o}n problem.
\newblock {\em Inverse Problems}, 34(4):045003, 21, 2018.

\bibitem[RS20]{RS17}
Angkana R\"{u}land and Mikko Salo.
\newblock The fractional {C}alder\'{o}n problem: low regularity and stability.
\newblock {\em Nonlinear Anal.}, 193:111529, 56, 2020.

\bibitem[R{\"u}l25]{ruland2023revisiting}
Angkana R{\"u}land.
\newblock Revisiting the {A}nisotropic {F}ractional {C}alder\'on {P}roblem.
\newblock {\em Int. Math. Res. Not. IMRN}, 2025(5):rnaf036, 2025.

\bibitem[SKJ{\etalchar{+}}19]{seo2018learning}
Jin~Keun Seo, Kang~Cheol Kim, Ariungerel Jargal, Kyounghun Lee, and Bastian
  Harrach.
\newblock A learning-based method for solving ill-posed nonlinear inverse
  problems: a simulation study of lung {EIT}.
\newblock {\em SIAM J. Imaging Sci.}, 12(3):1275--1295, 2019.

\bibitem[ST10]{ST10}
Pablo~Ra\'{u}l Stinga and Jos\'{e}~Luis Torrea.
\newblock Extension problem and {H}arnack's inequality for some fractional
  operators.
\newblock {\em Comm. Partial Differential Equations}, 35(11):2092--2122, 2010.

\bibitem[SU87]{sylvester1987global}
John Sylvester and Gunther Uhlmann.
\newblock A global uniqueness theorem for an inverse boundary value problem.
\newblock {\em Ann. of Math. (2)}, 125(1):153--169, 1987.

\bibitem[SUG{\etalchar{+}}17]{su2017monotonicity}
Zhiyi Su, Lalita Udpa, Gaspare Giovinco, Salvatore Ventre, and Antonello
  Tamburrino.
\newblock Monotonicity principle in pulsed eddy current testing and its
  application to defect sizing.
\newblock In {\em Applied Computational Electromagnetics Society
  Symposium-Italy (ACES), 2017 International}, pages 1--2. IEEE, 2017.

\bibitem[TR02]{tamburrino2002new}
Antonello Tamburrino and Guglielmo Rubinacci.
\newblock A new non-iterative inversion method for electrical resistance
  tomography.
\newblock {\em Inverse Problems}, 18(6):1809--1829, 2002.
\newblock Special section on electromagnetic and ultrasonic nondestructive
  evaluation.

\bibitem[TSV{\etalchar{+}}16]{tamburrino2016monotonicity}
Antonello Tamburrino, Zhiyi Sua, Salvatore Ventre, Lalita Udpa, and Satish~S
  Udpa.
\newblock Monotonicity based imang method in time domain eddy current testing.
\newblock {\em Electromagnetic Nondestructive Evaluation (XIX)}, 41:1, 2016.

\bibitem[VMC{\etalchar{+}}17]{ventre2017design}
Salvatore Ventre, Antonio Maffucci, Fran{\c{c}}ois Caire, Nechtan Le~Lostec,
  Antea Perrotta, Guglielmo Rubinacci, Bernard Sartre, Antonio Vento, and
  Antonello Tamburrino.
\newblock Design of a real-time eddy current tomography system.
\newblock {\em IEEE Transactions on Magnetics}, 53(3):1--8, 2017.

\bibitem[ZHS18]{zhou2018monotonicity}
Liangdong Zhou, Bastian Harrach, and Jin~Keun Seo.
\newblock Monotonicity-based electrical impedance tomography for lung imaging.
\newblock {\em Inverse Problems}, 34(4):045005, 25, 2018.

\end{thebibliography}

	\bibliographystyle{alpha}

\end{document}